\documentclass[12pt]{amsart}
%\usepackage{e-jc}

%--------------------------------------------------------------------
%\hoffset -25truemm \oddsidemargin=25truemm
%\evensidemargin=25truemm \textwidth=155truemm
\voffset -10truemm
%\topmargin=25truemm %\headheight=7truemm \headsep=0truemm
%\textheight=225truemm \baselineskip=16pt
%--------------------------------------------------------------------

\pdfoutput=1
\usepackage{latexsym}
\usepackage[centertags]{amsmath}
\usepackage{amsfonts}
\usepackage{amssymb}
\usepackage{amsthm}
\usepackage{newlfont}
\usepackage{graphics}
\usepackage{color}
\usepackage{multirow}
\usepackage{wrapfig}
\usepackage{rotating}
%\usepackage{circledsteps}

%%%%%
\usepackage[usenames,dvipsnames]{xcolor}
\usepackage{graphicx}

\usepackage{wrapfig}

\parskip 5pt
\parindent 2em
\newtheorem{theo}{Theorem}
\newtheorem{defn}[theo]{Definition}
\newtheorem{exam}[theo]{Example}
\newtheorem{lem} [theo]{Lemma}
\newtheorem{cor}[theo]{Corollary}
\newtheorem{prop}[theo]{Proposition}
\newtheorem{rem}[theo]{Remark}

\newtheorem{algor}[theo]{Algorithm}

\makeatletter %\@addtoreset{equation}{section}
\@addtoreset{theo}{}\makeatother

\setlength{\topmargin}{-5mm} \setlength{\oddsidemargin}{0.2cm}
\setlength{\evensidemargin}{0.2cm} \setlength{\textwidth}{15.8cm}
\setlength{\textheight}{22.42cm}

\textheight=24cm %\topmargin=-1cm

\def\CT{\mathop{\mathrm{CT}}}

\def\Z{\mathbb{Z}}
\def\Q{\mathbb{Q}}

\def\D{\mathcal{D}}

\def\y{\mathbf{y}}

\def\K{\mathcal{K}}
\def\ind{\textrm{ind}}
\def\m{\mathfrak{m}}

\newcommand\ci[1]{\mathrm{\textcircled{#1}}}

\def\mrem{\;\mathrm{rem}}
\def\A{\mathop{\mathcal{A}}}

\title{A Combinatorial Decomposition of Knapsack Cones}

\author{
Guoce Xin$^{1,*}$ and Yingrui Zhang$^2$ and Zihao Zhang$^3$
}

 \address{ $^{1,2}$School of Mathematical Sciences, Capital Normal University,
 Beijing 100048, PR China}
 \email{$^1$\texttt{guoce\_xin@163.com}\ \& $^2$\texttt{zyrzuhe@126.com} \& $^2$\texttt{zyrzuhe@126.com}}
\date{November 9, 2020}

%\date{April 8, 2015} %\today}
%\date{February 6, 2016} %\today}

\begin{document}

\begin{abstract}
In this paper, we focus on knapsack cones, a specific type of simplicial cones that arise naturally in the context of the knapsack problem
$x_1 a_1 + \cdots + x_n a_n = a_0$. We present a novel combinatorial decomposition for these cones, named \texttt{DecDenu}, which aligns with Barvinok's unimodular cone decomposition within the broader framework of Algebraic Combinatorics. Computer experiments support us to conjecture that our \texttt{DecDenu} algorithm is polynomial
when the number of variables \( n \) is fixed. If true, \texttt{DecDenu}  will provide the first alternative polynomial algorithm for Barvinok's unimodular cone decomposition,
at least for denumerant cones.

The \texttt{CTEuclid} algorithm is designed for MacMahon's partition analysis, and is notable for being the first algorithm to solve the counting problem for Magic squares of order 6. We have enhanced the \texttt{CTEuclid} algorithm by incorporating \texttt{DecDenu}, resulting in the \texttt{LLLCTEuclid} algorithm. This enhanced algorithm makes significant use of LLL's algorithm and stands out as an effective elimination-based approach.
\end{abstract}

\maketitle

\noindent
\begin{small}
 \emph{Mathematic subject classification}: Primary 52C07, Secondary 05--04, 05--08
\end{small}
%\subjclass[2010]{Primary 15A15; Secondary 05A15, 11B83}

\noindent
\begin{small}
\emph{Keywords}: Denumerant; MacMahon's partition analysis; Barvinok's unimodular cone decomposition; Constant term. 
\end{small}

\section{Introduction}
This paper focus on the knapsack problem \( x_1 a_1 + \cdots + x_n a_n = a_0 \), where \( a_0, a_1, \dots, a_n \) are positive integers
with \( \gcd(a_1, \dots, a_n) = 1 \). The number of nonnegative integer solutions to this problem, \( d(a_0; a_1, \dots, a_n) \), is known as Sylvester's denumerant.
The truth of \( d(a_0; a_1, \dots, a_n) = 0 \) is an NP-complete problem, and it is intricately related to the well-known Frobenius
numbers defined by $F(a_1,\dots,a_n)=\max \{a_0: d(a_0;a_1,\dots, a_n)=0\}$  \cite{alfonsin2005diophantine}.

Our primary objective is to introduce a combinatorial decomposition, akin to Barvinok's unimodular cone decomposition, for a class of simplicial cones
referred to as \emph{knapsack cones} or \emph{denumerant cones}. The denumerant (simplicial) cone is defined by
\begin{equation}
 \D_{\ell}(a_1, a_2, \dots, a_n) = \left\{ \sum_{i=1, i\neq \ell}^n k_i (-a_i e_{\ell} + a_{\ell} e_i) : k_i \in \mathbb{R}_{\geq 0} \right\},
\end{equation}
where \( e_i = (0, \dots, 0, 1, 0, \dots, 0)^T \) is the standard \( i \)-th unit vector for all \( i \). This cone is generated by \( d = n - 1 \)
vectors in \( \mathbb{R}^n \) and is therefore not full-dimensional.

When \( n \) is fixed,  \( d(a_0; a_1, \dots, a_n) \)  can be computed using Barvinok's polynomial algorithm\cite{barvinok1994polynomialalgorithm} from the field of Computational Geometry
for the lattice point counting of rational convex polytopes. The C-package \texttt{LattE} \cite{de2004effectiveLattE} is a notable implementation of Barvinok's algorithm,
often lauded as ``state of the art" and setting a high standard. Barvinok's algorithm, along with its \texttt{LattE} implementation, has been widely utilized across various studies, as indicated by the works of \cite{bajo2024weighted,  Baldoni2012, barvinokwodd2003short, breuer2017polyhedral, de2005computational,de2009ehrhartTodd,  DONGRE202339, xinxudedekindsums2023}, among others.  Unfortunately, it has ceased development.

Our approach falls under the domain of Algebraic Combinatorics and is in line with the \texttt{CTEuclid} algorithm developed in \cite{xin2015euclid},
which enables us to handle Laurent polynomial numerators uniformly. In that paper, the first author set forth the ambitious goal
of devising an optimal algorithm for MacMahon's partition analysis\cite{andrews2001macmahonOmegaalgorithm2}. Xin proposed to achieve this goal by integrating the elegant ideas of
LattE into his (non-polynomial) \texttt{CTEuclid} algorithm.
In the in-depth study of \texttt{LattE} and \texttt{CTEuclid},  we are confident that this goal will be achieved in the near future, with this paper contributing as a foundation.

% As we delved deeper into the intricacies of \texttt{LattE}, we anticipate that this ultimate goal will be realized within the next several years. %Section \ref{} provides an overview, where the combinatorial decomposition presented
% in this paper will play a pivotal role.

We introduce a basic building block that relies on the following established result.
\begin{prop}\label{prop-PFD-fund}
Suppose the partial fraction decomposition of an  Elliott rational function  $F(\lambda)$ is given by
\begin{align}\label{f-partialfraction}
  F(\lambda)=\frac {L(\lambda)} {\prod_{i=1}^n(1-u_i\lambda^{a_i})}=P(\lambda) + \frac {p(\lambda)}{\lambda^k}+\sum_{i=1}^n\frac{A_i(\lambda)}{1-u_i\lambda^{a_i}},
\end{align}
where the $u_i$'s are free of $\lambda$, $L(\lambda)$ is a Laurent polynomial, $P(\lambda), p(\lambda)$, and the $A_i(\lambda)$'s are all polynomials,
$\deg p(\lambda) < k$, and  $\deg A_i(\lambda) < a_i$ for all $i$.

Then $A_s(\lambda)$ is uniquely characterized by the following properties: i) $\deg A_s(\lambda) < a_s$; ii) $
            A_s(\lambda)\equiv F(\lambda)\cdot (1-u_s \lambda^{a_s})  \pmod{\langle 1-u_s\lambda^{a_s}\rangle}$
  where $\langle f(\lambda) \rangle$ denotes the ideal generated by the polynomial $f(\lambda)$.
\end{prop}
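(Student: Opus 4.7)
The plan is to verify that the $A_s(\lambda)$ produced by the decomposition satisfies (i) and (ii), then use the structure of the quotient ring $R_s := \mathbb{F}[\lambda]/\langle 1 - u_s\lambda^{a_s}\rangle$ (with $\mathbb{F}$ the underlying field) to upgrade ``satisfies'' to ``is uniquely determined''. Condition (i) is built into the hypothesis of \eqref{f-partialfraction}. For (ii), I would multiply the identity \eqref{f-partialfraction} through by $(1 - u_s\lambda^{a_s})$ and reduce the resulting rational-function identity modulo $\langle 1 - u_s\lambda^{a_s}\rangle$.

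Before passing to the quotient I must ensure that every rational function appearing has a denominator that is invertible in $R_s$. Two cases arise. Powers of $\lambda$ from the term $p(\lambda)/\lambda^k$ are handled by the identity $\lambda \cdot u_s\lambda^{a_s-1} \equiv 1 \pmod{\langle 1 - u_s\lambda^{a_s}\rangle}$, so $\lambda$ is a unit in $R_s$. For each $i \neq s$, the factor $1 - u_i\lambda^{a_i}$ must also be a unit modulo $\langle 1 - u_s\lambda^{a_s}\rangle$; this is precisely the coprimality condition on the denominator factors that makes the stated partial fraction decomposition well posed with uniquely assigned $A_i$'s. Granting these invertibilities, every term on the right-hand side of \eqref{f-partialfraction} except the $s$-th one carries an explicit factor of $(1 - u_s\lambda^{a_s})$ after multiplication, so it vanishes in $R_s$. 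What remains is exactly $A_s(\lambda) \equiv F(\lambda)(1 - u_s\lambda^{a_s})$ in $R_s$, which is (ii).

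For uniqueness, I would use that $R_s$ is a free $\mathbb{F}$-module of rank $a_s$ with basis $1, \lambda, \ldots, \lambda^{a_s - 1}$, so every residue class has a unique polynomial representative of degree less than $a_s$. Conditions (i) and (ii) together pin down precisely this representative for the class of $F(\lambda)(1-u_s\lambda^{a_s})$, so $A_s(\lambda)$ is forced.

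The main obstacle, such as it is, is the interpretation of (ii) itself: $F(\lambda)(1-u_s\lambda^{a_s})$ is a priori a rational function, not a polynomial, so the congruence must be read in an appropriate localization of $\mathbb{F}[\lambda]$ in which the irrelevant denominators (the powers of $\lambda$ and the factors $1 - u_i\lambda^{a_i}$ for $i \neq s$) have been inverted, before reducing modulo $\langle 1 - u_s\lambda^{a_s}\rangle$. This is harmless under the coprimality hypothesis but deserves an explicit sentence. An alternative, more evaluative route would substitute the $a_s$ distinct roots of $1 - u_s\lambda^{a_s}$ in an algebraic closure and reconstruct $A_s$ by Lagrange interpolation; this yields the same conclusion but is less flexible since it depends on the base field containing enough roots of unity.
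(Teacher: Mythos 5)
Your proof is correct. The paper itself states Proposition \ref{prop-PFD-fund} as an ``established result'' and gives no proof, so there is nothing to compare line by line; the argument you supply --- multiply \eqref{f-partialfraction} by $(1-u_s\lambda^{a_s})$, observe that $\lambda$ and the factors $1-u_i\lambda^{a_i}$ for $i\neq s$ are units in the localization so every other summand lies in the ideal, then invoke the fact that $\mathbb{F}[\lambda]/\langle 1-u_s\lambda^{a_s}\rangle$ is free with basis $1,\lambda,\dots,\lambda^{a_s-1}$ (the leading coefficient $-u_s$ being a unit) to get uniqueness of the degree-$<a_s$ representative --- is exactly the standard argument the paper is implicitly relying on, and it is consistent with how the result is used later (the recursion \eqref{e_fund_recursion} and the root-of-unity evaluation in Proposition \ref{prop-key-transfer}). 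Your remark that the congruence in (ii) must be read in a localization inverting $\lambda$ and the coprime factors, under the coprimality hypothesis the paper states immediately after the proposition, is precisely the point worth making explicit.
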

This proposition applies when the denominator factors \( (1-u_i\lambda^{a_i}) \) are coprime to each other.

The fundamental construction we employ is a linear operator \( \A_{1-u_i\lambda^{a_i}}: F(\lambda) \mapsto A_i(0) \),
where the \( \lambda \) will be evident from the context. The \texttt{CTEuclid} algorithm utilizes recursion to compute \( A_i(0) \).

For simplicity, let us elucidate this concept using the denumerant model. We first express
$$d(a_0;a_1,\dots, a_n) = \CT_\lambda F(\lambda;\y) \big|_{y_i=1} \quad \text{where} \quad F(\lambda;\y) = \frac{\lambda^{-a_0}}{(1-\lambda^{a_1}y_1)\cdots (1-\lambda^{a_n} y_n)}.$$
Proposition \ref{prop-PFD-fund} then comes into play, yielding
$$ \CT_{\lambda} F(\lambda;\y) = \sum_{i=1}^n A_i(0) = \sum_{i=1}^n \A_{1-y_i\lambda^{a_i}} F(\lambda;\y).$$
For each \( i \), there exists a proper rational function \( \bar{F}(\lambda,\y) \) that vanishes at \( \lambda = 0 \) and is of the form
\begin{equation}\label{e-func-mod}
 \bar{F}(\lambda,\y) = \frac{u_0 \lambda^{b_0}}{(1-u_1\lambda^{b_1})\cdots (1-u_n\lambda^{b_n})},
\end{equation}
such that i) \( \bar{F}(\lambda;\y) \cdot (1-y_i\lambda^{a_i}) \equiv F(\lambda;\y) \cdot (1-y_i\lambda^{a_i}) \pmod{\langle 1-y_i\lambda^{a_i}\rangle} \);
ii) \( 0 \leq b_j < a_i/2 \) for all \( j \neq i \) and \( b_j \equiv \pm a_j \pmod{a_i} \).

Then we have the recursion
\begin{equation}\label{e_fund_recursion}
 \A_{1-y_i\lambda^{a_i}} F(\lambda;\y)=\A_{1-y_i\lambda^{a_i}} \bar{F}(\lambda;\y)=-\sum_{j\neq i} \A_{1-u_j\lambda^{b_j}} \bar{F}(\lambda;\y).
\end{equation}

In this way, we can give a decomposition of the form
\begin{equation}\label{sec1_sum_of_function}
  \A_{1-y_s\lambda^{a_s}} F(\lambda;\y)=\sum_{i=1}^{N_s} \frac{L_i(\y)}{(1-\y^{\alpha_{i,1}})\cdots (1-\y^{\alpha_{i,d}})},
\end{equation}
where $N_s$ is a positive integer and $d=n-1$ refers to the dimension of the problem.
By representing \( \CT_{\lambda} F(\lambda;\y) \) as a sum of \( N_1 + \cdots + N_n \) simple rational functions, we can efficiently compute
its limit at \( \y = 1 \) \cite{XinTodd2023}. However, it is important to note that \( N_s \) cannot be bounded by a polynomial in
the size \( \log(a_s) \) of the input, even if \( n \) is fixed.

Through a meticulous examination of \texttt{LattE}, we uncover a crucial connection (see \cite{xinxudedekindsums2023}). This connection reveals that:
\begin{equation*}
  \A_{1-y_s\lambda^{a_s}} F(\lambda;\y) = \sum_{\alpha \in (\D^v_{s}(a_1,a_2,\dots, a_n) \cap \Z^n)} \y^{\alpha},
\end{equation*}
where $\D^v_{s}(a_1,a_2,\dots, a_n)$ denotes the set $\D_{s}(a_1,a_2,\dots, a_n)$ shifted by the vector $v \in \Q^n$.

Barvinok's algorithm provides a unimodular decomposition of \( \D_{a_s}(a_1,a_2,\dots, a_n) \), thereby yielding a similar but ``short"  formula
akin to \eqref{sec1_sum_of_function}. In this context, ``short" refers to the fact that \( N_s \) is bounded by a polynomial in \( \log a_s \).

Our novel decomposition method is rooted in \texttt{CTEuclid}, augmented by an additional rule. This rule, encapsulated in
Proposition \ref{prop-key-transfer}, is pivotal and is equivalent to the assertion that:
\begin{equation}\label{e-A-multiplier}
  \A_{1-u\lambda^a} F(\lambda)(1-u\lambda^a) = \A_{1-u^{1/m}\lambda^a} F(\lambda^m)(1-u \lambda^{am}), \quad \text{if } (a,m) = 1.
\end{equation}
The multiplier \( m \) is a parameter that influences the ``shortness" of the resulting sum. For an optimal multiplier
\( \mathfrak{m} \) (where \( (a,\mathfrak{m}) = 1 \)), the application of \eqref{e-A-multiplier} followed by \eqref{e_fund_recursion}
results in small indices \( b_j \) that are bounded by \( a_s^{\frac{d-1}{d}} \),
a bound that aligns closely with Barvinok's algorithm. This suggests that we can expect a short sum.
Nevertheless, the condition \( (a,\mathfrak{m}) = 1 \) cannot be guaranteed, which can diminish
the "shortness" but is counterbalanced by several favorable properties that promote it.

We have developed the DecDenu C and Maple package to implement the aforementioned ideas.
The well-established LLL(Lenstra-Lenstra-Lovász) algorithm is employed to identify an effective multiplier,
a method akin to that used in \texttt{LattE}. The performance of our approach rivals that of \texttt{LattE} in the decomposition of denumerant cones,
leading us to conjecture that our algorithm is polynomial in the input size.
If this conjecture holds true, our decomposition method will provide the first alternative polynomial algorithm for Barvinok's unimodular cone decomposition,
at least for denumerant cones.

The structure of this paper is as follows. In Section 2, we delineate \texttt{LattE}'s implementation of Barvinok's unimodular cone decomposition algorithm.
This algorithm has set a high standard for the decomposition of denumerant cones. In Section 3, we present the key transformation and elucidate the process for determining a suitable multiplier.
Additionally, we provide Algorithm \texttt{DecDenu} and compare its performance with \texttt{LattE}.
Section 4 offers a concise introduction to the domain of iterated Laurent series and details the algorithm \texttt{LLLCTEuclid},
which addresses the core problem of MacMahon's partition analysis. The central issue here is the decomposition of denumerant cones.

\section{LattE's Implementation of Barvinok's Decomposition} \label{sec_BarAlg}
This section provides a concise overview of the fundamental principles underlying Barvinok's unimodular cone decomposition and its implementation in \texttt{LattE},
which inspires our development of DecDCone. For a detailed treatment, we direct the reader to \cite{de2004effectiveLattE}, and for
a deeper exploration within the realm of algebraic combinatorics, we refer to \cite{xin2023algebraic}.

Consider a nonsingular rational matrix \( A \) of order \( d \). This matrix
defines a lattice \( L(A) = A\mathbb{Z}^d \) and a simplicial cone \( C(A) = A \mathbb{R}_{\geq 0}^d \).
The LLL algorithm plays a crucial role in identifying a reduced basis for \( L(A) \), which is essential for locating the smallest
vector within the lattice under the infinity norm.

A simplicial cone \( \mathcal{K} = C(A) \) can be normalized to \( C(\bar{A}) \), where \( \bar{A} \) consists of primitive vectors
(i.e., integral vectors with gcd $1$).
The index \( \ind(\mathcal{K}) \) is then determined by \( |\det (\bar{A})| \), which counts the lattice points within
the fundamental parallelepiped \( \Pi(\mathcal{K}) = \{ k_1 \bar{\alpha}_{1} + \cdots + k_{d} \bar{\alpha}_{d} :
0 \leq k_1, \dots, k_d < 1 \} \). The lattice point generating function
\( \sigma_{\mathcal{K}}(\mathbf{y}) = \sum_{\alpha \in \mathcal{K} \cap \mathbb{Z}^d} \mathbf{y}^{\alpha} \)
is closely related to that of \( \Pi(\mathcal{K}) \) through Stanley's formula \cite{stanley2011EC1}:
\[
\sigma_{\mathcal{K}}(\mathbf{y}) = \frac{\sigma_{\Pi(\mathcal{K})}(\mathbf{y})}{\prod_{k=1}^{d} (1-\mathbf{y}^{\bar{\alpha}_{k}})}.
\]
Notably, \( \sigma_{\Pi(\mathcal{K})}(\mathbf{y}) = 1 \) if \( \mathcal{K} \) is unimodular, i.e., \( \ind(\mathcal{K}) = 1 \).

Barvinok's theorem provides a polynomial-time algorithm for dissecting a rational cone into unimodular cones, which is instrumental
in computationally addressing simplicial cones with large indices.
\begin{theo}[\cite{barvinok1994polynomialalgorithm}] \label{theo-polytime-barv}
Let \( d \) be the fixed dimension. There exists a polynomial-time algorithm that, for a given rational simplicial
cone \( \mathcal{K} = C(A) \subset \mathbb{R}^d \),
computes unimodular cones \( \mathcal{K}_i = C(A_i) \), \( i \in I = \{1,2,\dots, N\} \) and signs \( \epsilon_i \in \{ 1, -1 \} \) such that
\[
\sigma_{\mathcal{K}}(\mathbf{y}) = \sum_{i \in I} \epsilon_i \sigma_{\mathcal{K}_i}(\mathbf{y}).
\]
The sum is ``short" or polynomial in size, implying that the number of summands \( |I| = N \) is bounded by a polynomial in \( \log(\ind(\mathcal{K}) ) \).
\end{theo}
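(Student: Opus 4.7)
The plan is a proof by induction on $\ind(\mathcal{K})$, building the decomposition through iterated applications of a single index-reducing step. The key geometric ingredient is Minkowski's convex-body theorem applied to the symmetric parallelepiped $\{\sum t_i\bar\alpha_i:|t_i|<c\}$, whose volume $(2c)^d\ind(\mathcal{K})$ exceeds $2^d$ precisely when $c\geq\ind(\mathcal{K})^{-1/d}$. This guarantees a nonzero lattice vector $w=\sum c_i\bar\alpha_i\in\Z^d$ with $|c_i|<\ind(\mathcal{K})^{-1/d}$ for every $i$. The crucial algorithmic observation is that such a $w$ can be computed in polynomial time by LLL reduction of a suitably rescaled lattice, at the cost of a $d$-dependent multiplicative constant that is absorbed when $d$ is fixed.

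First I would establish the single decomposition step. Given $w$, form $d$ simplicial cones $\mathcal{K}_i$ by replacing the $i$-th generator $\bar\alpha_i$ with $w$. A standard inclusion-exclusion on cone indicator functions (localized through a valuation on the polytope algebra) yields the signed identity
\begin{equation*}
 \sigma_{\mathcal{K}}(\mathbf{y})=\sum_{i=1}^d \epsilon_i\,\sigma_{\mathcal{K}_i}(\mathbf{y}),
\end{equation*}
with $\epsilon_i=\mathrm{sign}(c_i)$, after suitably handling lower-dimensional faces either by a general-position perturbation of $w$ or by Barvinok's half-open convention so that boundary contributions telescope. Expanding along the $i$-th column of $\bar A_i:=[\bar\alpha_1,\ldots,w,\ldots,\bar\alpha_d]$ gives $\det\bar A_i=c_i\det\bar A$, hence
\begin{equation*}
 \ind(\mathcal{K}_i)=|c_i|\cdot\ind(\mathcal{K})\leq \ind(\mathcal{K})^{(d-1)/d}.
\end{equation*}

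Next I would iterate. Starting with $I=\ind(\mathcal{K})$, after $k$ rounds every cone has index at most $I^{((d-1)/d)^k}$; choosing $k=O(\log\log I)$ drives every index below $2$, at which point all resulting cones are unimodular. Each round multiplies the number of cones by at most $d$, so the final collection has size $d^{O(\log\log I)}=(\log I)^{O(1)}$, polynomial in the bit-length of $\ind(\mathcal{K})$. Since each round performs one LLL reduction plus a constant number of determinant and inversion computations, the whole procedure runs in polynomial time for fixed $d$, yielding the theorem.

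The main obstacle I expect lies in the careful accounting of lower-dimensional faces in the signed identity and in the precise propagation of cone counts through the recursion. A naive implementation could introduce redundant pieces from degenerate configurations where $w$ lies on a facet of $\mathcal{K}$, or could inflate the per-round count by a factor larger than $d$ and thereby destroy the polynomial bound. Both issues are resolvable via Barvinok's valuation-theoretic framework or a symbolic-perturbation trick, but this disciplined book-keeping, together with the LLL step, is what makes the argument genuinely nontrivial and prevents one from replacing LLL by a generic short-vector search.
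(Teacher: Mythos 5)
The paper does not prove this statement: Theorem~\ref{theo-polytime-barv} is quoted from Barvinok's 1994 paper, and Section~\ref{sec_BarAlg} only summarizes how \texttt{LattE} implements it. So the comparison here is between your sketch and Barvinok's original argument, which your proposal reconstructs faithfully: Minkowski's theorem applied to the coordinates of lattice points in the basis $\bar\alpha_1,\dots,\bar\alpha_d$ (equivalently, a short vector in $L(\bar A^{-1})$, exactly the vector $\beta$ the paper describes \texttt{LattE} computing), the replace-one-generator decomposition with $\ind(\mathcal{K}_i)=|c_i|\ind(\mathcal{K})\le\ind(\mathcal{K})^{(d-1)/d}$, and the $O(\log\log\ind(\mathcal{K}))$-round iteration giving $d^{O(\log\log\ind(\mathcal{K}))}=(\log\ind(\mathcal{K}))^{O(1)}$ cones. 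Two loose ends deserve more than the hand-wave you give them. First, the signed identity $[\mathcal{K}]=\sum_i\epsilon_i[\mathcal{K}_i]$ holds only modulo indicators of lower-dimensional cones, and these do not simply ``telescope''; the standard fix (used by \texttt{LattE} and alluded to in the paper as Brion's polarization trick) is to run the entire recursion on the dual cone, where lower-dimensional pieces can legitimately be discarded because polarization kills them. Second, your absorption of the LLL approximation factor into a $d$-dependent constant is fine for the asymptotic count but threatens termination: when $\ind(\mathcal{K})$ has dropped to a constant depending on $d$, the bound $C_d\ind(\mathcal{K})^{(d-1)/d}$ need not be smaller than $\ind(\mathcal{K})$, so one must switch to an exact shortest-vector computation (still polynomial for fixed $d$) or otherwise argue strict decrease of the integer index. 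With those two points made precise, your argument is a correct proof of the theorem along Barvinok's original lines.
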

LattE's implementation of Barvinok's decomposition revolves around finding a lattice point \( \gamma \) within a closed
parallelepiped defined by \( \K \). This is achieved by computing the smallest vector \( \beta \) within the
lattice \( L(\bar{A}^{-1}) \) and setting \( \gamma = \bar{A}\beta \).
If \( \gamma \) meets the necessary condition, it is then utilized to decompose \( \mathcal{K} \) into
at most \( d \) cones \( \mathcal{K}_i \) with reduced indices that satisfy \( \ind (\mathcal{K}_i) \leq (\ind (\mathcal{K}))^{\frac{d-1}{d}} \).

The algorithm proceeds iteratively until all cones are reduced to unimodular cones.
 Utilizing Brion's polarization trick facilitates this decomposition by disregarding lower-dimensional cones.

\section{A Combinatorial Decomposition of Denumerant Cones}\label{sec_DecDenu}
We first describe how to find a good multiplier, which is defined after the key transformation in Proposition \ref{prop-key-transfer}. Then we
give the pseudo code of the algorithm.

\subsection{The Key Transformation and Good Multipliers}
For the function $F(\lambda)$ as in \eqref{f-partialfraction}, we also denote by
$$\CT_{\lambda} \underline{\frac{1}{1-u_s {\lambda}^{a_s}}}F(\lambda) (1-u_s {\lambda}^{a_s})=A_s(0)=\A_{1-u_s {\lambda}^{a_s}} F(\lambda).$$
For this notation, one can think that when taking the constant term in $\lambda$, only the single underlined factor
of the denominator contributes.

Let $(k,a)=1$, and $\xi= \exp(\frac{2\pi \sqrt{-1}}{a})$ be the $a$-th root of unity.
There is a basic fact that the map $x\mapsto x^k$ is an automorphism on the group $\{ \xi^j: j=0,1,\dots, a-1\} $
of $a$th roots of unity.
We establish the following proposition.
\begin{prop}\label{prop-key-transfer}
If $(k,a)=1$, then for any rational function $F(x)$ that is meaningful when modulo $1-ux^a$, we have
$$\CT_x \underline{\frac{1}{1-u x^a}} F(x) =  \CT_x \underline{\frac{1}{1-u^{1/k}x^a}} F(x^k).$$
\end{prop}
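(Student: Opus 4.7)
The plan is to unpack the underlined constant-term notation, reduce both sides of the claimed identity to an explicit average of $F$-values over the roots of the underlined denominator factor, and then invoke $(k,a)=1$ to match the two averages via a bijection of root sets.

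\emph{Step 1 (roots-averaging formula).} First I would establish the following key identity: for any rational function $G(x)$ whose denominator is coprime to $1-ux^a$,
\[
\CT_x\, \underline{\tfrac{1}{1-ux^a}}\, G(x) \;=\; \frac{1}{a}\sum_{j=0}^{a-1} G(\xi_j),
\]
where $\xi_0,\dots,\xi_{a-1}$ denote the (simple) roots of $1-ux^a$. By the defining property of the notation together with Proposition \ref{prop-PFD-fund}, the left side equals $A(0)$, where $A(x)$ is the unique polynomial of degree less than $a$ with $A \equiv G \pmod{1-ux^a}$. Hence $A(\xi_j) = G(\xi_j)$ for every $j$, so $A$ is the Lagrange interpolation of $G$ at the $\xi_j$'s. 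Using the derivative identity $\prod_{\ell\neq j}(\xi_j-\xi_\ell) = a\xi_j^{a-1}$ together with $u\xi_j^a = 1$, a short calculation shows that every Lagrange basis polynomial $L_j$ satisfies $L_j(0) = 1/a$, which yields the displayed formula.

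\emph{Step 2 (matching via coprimality).} Applying Step 1 to both sides of the proposition, with $G(x) = F(x)$ on the left and $G(x) = F(x^k)$ on the right, it suffices to show
\[
\sum_{j=0}^{a-1} F(\xi_j) \;=\; \sum_{j=0}^{a-1} F(\eta_j^k),
\]
where $\eta_0,\dots,\eta_{a-1}$ are the roots of $1-u^{1/k}x^a$. Now $\eta_j^{ka} = (\eta_j^a)^k = (u^{-1/k})^k = u^{-1}$, so each $\eta_j^k$ is an $a$-th root of $u^{-1}$, i.e.\ one of the $\xi_{j'}$'s. Moreover, if $\eta_i^k = \eta_j^k$ then $(\eta_i/\eta_j)^k = 1$ while $\eta_i/\eta_j$ is an $a$-th root of unity, so its order divides $\gcd(k,a)=1$ and $\eta_i=\eta_j$; thus the $a$ values $\eta_j^k$ are pairwise distinct. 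Therefore $\{\eta_j^k : 0 \le j < a\} = \{\xi_j : 0 \le j < a\}$, the two sums agree, and the proposition follows.

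\emph{Main obstacle.} The crux is Step 1: recognizing $A$ as a Lagrange interpolant of $G$ at the $\xi_j$, then evaluating $\sum_j L_j(0) \cdot G(\xi_j)$ with the uniform value $L_j(0) = 1/a$ extracted from $u\xi_j^a = 1$. Once this roots-averaging formula is in hand, the rest of the proof is just the observation, stated in the paragraph preceding the proposition, that $x\mapsto x^k$ is a bijection on the group of $a$-th roots of unity precisely when $(k,a)=1$; this is the only place where the coprimality hypothesis is used.
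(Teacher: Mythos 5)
Your proof is correct and takes essentially the same route as the paper's: both reduce the underlined constant term to the average $\frac{1}{a}\sum_j G(\xi_j)$ over the roots of the underlined factor and then use $(k,a)=1$ to biject the root set of $1-u^{1/k}x^a$ (raised to the $k$-th power) onto that of $1-ux^a$. Your Lagrange-interpolation derivation of $L_j(0)=1/a$ simply fills in the step the paper dismisses as ``straightforward.''
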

\begin{proof}
By utilizing the properties of roots of unity, it is straightforward to derive that
\begin{align*}
  \CT_x \frac{1}{\underline{1-u x^a}} F(x) &= \frac{1}{a} \sum_{i=0}^{a-1}  F(u^{-1/a} \xi^i).
  \end{align*}
Substituting $F(x)$ with $G(x)=F(x^k)$ and $u$ with $\bar{u}=u^{1/k}$ in the above equality yields
\begin{align*}
\CT_x \underline{\frac{1}{1-u^{1/k}x^a}} F(x^k) &= \frac{1}{a} \sum_{i=0}^{a-1}  G(\bar{u}^{-1/a} \xi^i) = \frac{1}{a} \sum_{ki=0}^{a-1}  F(u^{-1/a} \xi^{ki}).
\end{align*}
The proposition thus follows.
\end{proof}
When applying Proposition \ref{prop-key-transfer}, $1\leq k \leq a-1$ will be referred to as a \emph{multiplier}, and is called \emph{valid} if $(k,a)=1$. Note that $k$ and $a-k$ play equivalent roles, so we focus on $k$ with $1\leq k\leq \frac{a}{2}$.

We employ the notation
$$ [b]_a := \min\left(\mrem(b,a), a - \mrem(b,a)\right)$$
to denote the absolute value of the signed remainder of $b$ when divided by $a$.

The following result parallels Minkovski's first theorem.
\begin{prop}\label{p-existence}
Suppose $a_1,\dots, a_n$, and $a$ are positive integers. Then there exists at least one multiplier $1\leq k\leq a/2$ such that
 $$ [k a_j]_a  \leq \left\lfloor \frac{a}{\lfloor (a-1)^ \frac{1}{n}\rfloor } \right\rfloor \quad \textrm{ for all } j.$$
Such $k$ are referred to as \emph{good} multipliers.
\end{prop}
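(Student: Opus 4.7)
The plan is to derive the desired multiplier by a direct pigeonhole argument in the spirit of Minkowski's first theorem (equivalently, Dirichlet's simultaneous approximation applied to the fractions $a_1/a, \ldots, a_n/a$). Set $m := \lfloor (a-1)^{1/n}\rfloor$ and $L := \lfloor a/m \rfloor$, so that $L$ is exactly the target bound. If $m \leq 1$ then $L \geq a$ and the claim holds trivially for any $k$ (since $[ka_j]_a \leq a/2$ automatically), so we may assume $m \geq 2$.

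First I would form the $m^n + 1$ test vectors
$$v_t = (t a_1 \bmod a,\; \ldots,\; t a_n \bmod a), \qquad t = 0, 1, \ldots, m^n,$$
lying in the ambient cube $\{0, 1, \ldots, a-1\}^n$, and partition each coordinate interval $\{0, 1, \ldots, a-1\}$ into $m$ consecutive blocks of length at most $\lceil a/m \rceil$. This produces $m^n$ product boxes. Since $m^n \leq a-1$ by the definition of $m$, there are strictly more vectors $v_t$ than boxes, so pigeonhole forces two indices $0 \leq t_1 < t_2 \leq m^n$ with $v_{t_1}$ and $v_{t_2}$ in a common box. Setting $k := t_2 - t_1 \in \{1, \ldots, m^n\}$, I would observe that for each $j$ the entries $t_1 a_j \bmod a$ and $t_2 a_j \bmod a$ lie in a common block of length at most $\lceil a/m \rceil$, so their integer difference has absolute value at most $\lceil a/m \rceil - 1 \leq L$. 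Since this difference represents $k a_j \pmod a$ and has absolute value $\leq L \leq a/2$, it equals $\pm [k a_j]_a$, thereby certifying $[k a_j]_a \leq L$ for every $j$.

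If the resulting $k$ exceeds $a/2$, I would replace it by $a - k \in [1, a/2]$; the bound is preserved because $[-k a_j]_a = [k a_j]_a$. The argument presents no substantive obstacle, only two elementary consistency checks: that $m^n + 1 \leq a$ (so the pigeonhole step is legitimate), and that $\lceil a/m \rceil - 1 \leq \lfloor a/m \rfloor$ (so the box-size bound matches the stated target $L$). Both are immediate from the definitions of floor and ceiling, which completes the plan.
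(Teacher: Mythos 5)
Your proof is correct and rests on essentially the same idea as the paper's: a pigeonhole argument on the residues $k a_j \bmod a$ in the spirit of Dirichlet's simultaneous approximation theorem, with the difference $k = t_2 - t_1$ (or $a-k$) serving as the good multiplier. The only difference is presentational — you apply the pigeonhole once to $m^n+1$ vectors distributed among $m^n$ product boxes, whereas the paper iterates a one-dimensional pigeonhole $n$ times to refine a chain $S_0 \supseteq S_1 \supseteq \cdots \supseteq S_n$ of candidate multipliers; both hinge on the same count $\lfloor (a-1)^{1/n}\rfloor^{n} \leq a-1$ and yield the same bound.
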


\begin{proof}
We prove this proposition by invoking the well-known pigeonhole principle. For brevity, let $\alpha = \lfloor (a-1)^{\frac{1}{n}} \rfloor$ and $\beta = \lceil \frac{a}{\alpha} \rceil$. Consider the set $S_0 = \{0, 1, \dots, a-1\}$. We form $\alpha$ boxes by partitioning $S_0$ into 
 disjoint union of $B_i = \{k \in \mathbb{Z} : (i-1)\beta \leq k < i\beta\}$ for $i = 1, \dots, \alpha$. Then any two numbers within the same box differ by at most $\beta - 1 = \left\lfloor \frac{a}{\lfloor (a-1)^{\frac{1}{n}}\rfloor } \right\rfloor$.
 
We commence by placing the $a$ numbers $k a_1 \pmod a$ for all $k \in S_0$ into the $\alpha$ boxes. There exists a box containing at least $\lceil \frac{a}{\alpha} \rceil > (a-1)^{\frac{n-1}{n}}$ elements. Let us assume that these elements are produced by $k \in S_1$ with $|S_1| > (a-1)^{\frac{n-1}{n}}$.
Next, we distribute the $|S_1|$ numbers $k a_2 \pmod a$ for all $k \in S_1$ into the $\alpha$ boxes. This process repeats, resulting in a sequence of sets $S_0 \supseteq S_1 \supseteq S_2 \supseteq \cdots \supseteq S_n$ such that $|S_i| > (a-1)^{\frac{n-i}{n}}$ for each $i$. Specifically, $S_n$ contains at least two elements, say $k_1 > k_2$. By our construction, $-\beta \leq k_1 a_i - k_2 a_i \pmod a \leq \beta - 1$ holds for all $i$. Consequently, either $k_1 - k_2$ or $a - (k_1 - k_2)$ serves as the desired good multiplier.
\end{proof}

\medskip
\emph{Identifying a nearly good multiplier.}
For $a \leq 2^n$, the definition of a good multiplier in Proposition \ref{p-existence} must be adapted since $[k a_j]_a \leq \frac{a}{2}$ holds for any $k$, and $a^{\frac{n-1}{n}} = \frac{a}{a^{\frac{1}{n}}} \geq \frac{a}{2}$.

In the case of large $a$, locating a good multiplier is challenging. This task can be rephrased as the following problem:
Find a $k$ and integers $m_j$, such that $\max_{j} |ka_j - m_j a|$ is minimized. In the language of lattices, this is akin to seeking the smallest nonzero distance (measured using the infinite norm) between two lattices: one being the $\mathbb{Z}$ span of the single vector $(a_1, a_2, \dots, a_n)$, and the other being $a \mathbb{Z}^n$.

We can approximate good multipliers using the LLL algorithm. This approach is akin to how \texttt{LattE} identifies the shortest vector. We construct the matrix
$$L = \begin{pmatrix}
                                  0.01 & a_1 &  \cdots & a_n \\
                                  \textbf{0}&  & a E_n &
                                  \end{pmatrix}$$
where $E_n$ represents the $n \times n$ identity matrix. By applying the LLL algorithm to the row vectors of $L$, the resulting matrix will have short rows of the form $(0.01 k, ka_1 - m_1a, \dots, ka_n - m_ka)$. The rows are short only when $ka_j - m_ja$ are small for all $j$. Consequently, we obtain $n$ nearly good multipliers, denoted as $k_1, k_2, \dots, k_n$. It is worth noting that the constant $0.01$ can be replaced by any other small number to give more choices.

\emph{Selection of valid multipliers}
The nearly good multipliers identified may not necessarily be valid for the application of Proposition \ref{prop-key-transfer}. Therefore, we will retain only those $k_i$ that are valid and, if necessary, include $1$ as a natural candidate. The nearly good valid multiplier is then chosen from among these candidates.
\begin{rem}
The number of valid multipliers for a given $a$ is determined by Euler's totient function $\phi(a) = a \prod_{p} (1 - p^{-1})$, where the product ranges over all the prime factors of $a$. We consider only half of these valid multipliers. In certain special cases, such as $a = 6$, the only valid multiplier is $1$.
\end{rem}

\subsection{Algorithm \texttt{DecDenu}} \label{subsec_DecDenu}
In this section, we consider the function
$$F(\lambda)=\frac{\lambda^{-a_0}}{(1-y_1\lambda^{a_1})\cdots (1-y_n \lambda^{a_n})}.$$
The approach naturally extends when the numerator is replaced by a Laurent polynomial. Replacing each $y_i$ with $u_i$ yields the function presented in Proposition \ref{prop-PFD-fund}.

We first provide the pseudo code and then explain.

\begin{algor}[DecDenu] \label{alg_DecDenu}
\mbox{  } \\
\textbf{Input}: A sequence of positive integers $(a_0,a_1, \dots,a_n)$ with $\gcd(a_1,\dots, a_n)=1$ and a number $s\in \{1,2,\dots, n\}$.
\\
\textbf{Output}: A representation of $\A_{1-y_s\lambda^{a_s}} F(\lambda)$ as in  \eqref{sec1_sum_of_function}.

\begin{enumerate}
  \item If $a_s=1$ then return $F(\lambda)(1-y_s\lambda)\big|_{\lambda=y_s^{-1}}$. Otherwise, proceed recursively as follows.

\item (new) Identify a good valid multiplier $\m$ using the LLL-algorithm. Applying the key transformation (Proposition \ref{prop-key-transfer}) yields
$$\A_{1-y_s\lambda^{a_s}} F(\lambda)=\CT_{\lambda} \underline{\frac{1}{1-y_s^{\frac 1 \m }\lambda^{a_s}}} \frac{L(\lambda^{\m})} {\prod_{j=1,j\neq s}^n (1-y_j\lambda^{\m a_j})}.$$

  \item Utilizing \eqref{e-func-mod}, we can express
$$ \A_{1-y_s\lambda^{a_s}} F(\lambda) = \A_{1-y_s^{\frac 1 \m}\lambda^{a_s}} \bar{F}(\lambda)=\CT_{\lambda}\underline{\frac{1}{1-y_s^{\frac 1 \m }\lambda^{a_s}}} \frac{\bar{L}(\lambda)} {\prod_{j=1,j\neq s}^n (1-\bar{y}_j\lambda^{b_j})},$$
where $b_j = [\m a_j]_{a_s}$ for all $j\neq s$, $\bar{F}(\lambda)$ is proper in $\lambda$ with $\bar{F}(0)=0$.

    \item According to \cite[Proposition 11]{xin2015euclid}, we derive a recurrence relation of the form
    \begin{equation}\label{eq_recs}
      \A_{1-y_s\lambda^{a_s}} F(\lambda) = \sum_{j: j\neq s, b_j>0} \A_{1-\bar{y}_j\lambda^{b_j}} \bar{F}(\lambda).
    \end{equation}
\end{enumerate}
\end{algor}

The aforementioned algorithm, except Step 2, appears as the core steps of the \texttt{CTEuclid} algorithm. However, it is Step 2 that enables the possibility of a polynomial-time algorithm.

\begin{exam}\label{exa-7}
Compute the following constant term:
\begin{align*}
    \CT_{\lambda} \frac {\lambda^{-15}}{ \underline{( 1-{\lambda}^{7}y_1)}
   (1 -{\lambda}^{2}y_{{2}} )    (1 -{\lambda}^{3}y_3 )}.
\end{align*}
\end{exam}
\begin{proof}[Solution]
Applying Proposition \ref{prop-key-transfer} with respect to $k =2 $, the constant term becomes
 \begin{align*}
   \CT_{\lambda} \frac {\lambda^{-30}}{ \underline{( 1-{\lambda}^{7}y^{1/2}_{1} )}
  (1 -{\lambda}^{4}y_{{2}} )  (1 -{\lambda}^{6}y_{{3}} ) } &=
  \CT_{\lambda} \frac {\lambda^2 y_1^3 y_2^{-1} y_3^{-1}}{ \underline{( 1-{\lambda}^{7}y^{1/2}_{1} )}
  (1 -\lambda^{3} y_1^{1/2} y_2^{-1} )  (1 -\lambda y_1^{1/2} y_3^{-1} ) } \\
   =&  \CT_{\lambda} \frac {-\lambda^2 y_1^3 y_2^{-1} y_3^{-1}}{ ( 1-{\lambda}^{7}y^{1/2}_{1} )
  \underline{(1 -\lambda^{3} y_1^{1/2} y_2^{-1} )} (1 -\lambda y_1^{1/2} y_3^{-1} )}\\
  &+  \CT_{\lambda} \frac {-\lambda^2 y_1^3 y_2^{-1} y_3^{-1}}{ ( 1-{\lambda}^{7}y^{1/2}_{1} )
  (1 -\lambda^{3} y_1^{1/2} y_2^{-1} ) \underline{(1 -\lambda y_1^{1/2} y_3^{-1} )}},
 \end{align*}
For the first term, we have
  \begin{align*}
     \CT_{\lambda}& \frac {-\lambda^2 y_1^3 y_2^{-1} y_3^{-1}}{ ( 1-{\lambda}^{7}y^{1/2}_{1} )
  \underline{(1 -\lambda^{3} y_1^{1/2} y_2^{-1} )}  (1 -\lambda y_1^{1/2} y_3^{-1} ) } \\
   &= \CT_{\lambda} \frac {-\lambda^2 y^3_1 y_2^{-1} y_3^{-1} }{ (1-  \lambda y^2_2 y_1^{-1/2})
  \underline{(1 -\lambda^{3} y_1^{1/2} y_2^{-1} )} (1 -\lambda y_1^{1/2} y_3^{-1} )  } \\
    &=\CT_{\lambda} \frac {\lambda^2 y^3_1 y_2^{-1} y_3^{-1} }{\underline{ (1-  \lambda y^2_2 y_1^{-1/2})}
  (1 -\lambda^{3} y_1^{1/2} y_2^{-1} ) \underline{(1 -\lambda y_1^{1/2} y_3^{-1} )}  } \\
    &= \frac{y_1^4 y_2^{-5} y_3^{-1} }{(1- y_1^2 y_2^{-7}) (1- y_1 y_2^{-2} y_3^{-1}) }  + \frac{y_1^2 y_2^{-1} y_3 }{(1-y_3 y_2^2 y_1^{-1})(1-y_3^3 y_1^{-1} y_2^{-1})};
  \end{align*}
  For the second term, we have
  \begin{align*}
    \CT_{\lambda} \frac {-\lambda^2 y_1^3 y_2^{-1} y_3^{-1}}{ ( 1-{\lambda}^{7}y^{1/2}_{1} )
   (1 -\lambda^{3} y_1^{1/2} y_2^{-1} ) \underline{(1 -\lambda y_1^{1/2} y_3^{-1} )} }
   = \frac{-y_3 y_1^2  y_2^{-1}}{ (1- y^7_3 y_1^{-3}) (1- y_3^3 y_1^{-1} y_2^{-1})}.
  \end{align*}
 In total, we obtain three output terms. It is worth noting that
 we will obtain four terms if we use the \texttt{CTEuclid} algorithm (with the default value $k=1$).
\end{proof}

Notably, the algorithm incorporates fractional powers of the $y$ variables when Step 2 is employed, as displayed
in the middle steps of Example \ref{exa-7}. On the other hand, the three output terms contain only integer powers of the $y$ variables.
This is not a coincidence. Indeed, we have the following result, which is pivotal when considering constant terms involving multiple variables:
\begin{prop}
The output of \texttt{DecDenu} only contains integer powers of the $y$ variables.
\end{prop}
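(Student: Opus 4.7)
The plan is induction on the recursion depth of \texttt{DecDenu}. The base case $a_s=1$ is immediate: Step~1 returns $F(\lambda)(1-y_s\lambda)\big|_{\lambda=y_s^{-1}}=y_s^{a_0}/\prod_{j\neq s}(1-y_jy_s^{-a_j})$, a rational function with manifestly integer exponents of every $y_\ell$. Thus the substantive content lies in verifying that integrality is preserved through Steps~2--4.

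For the inductive step, suppose Step~2 selects the good multiplier $\mathfrak{m}$. Step~3 yields $\bar F(\lambda)=\bar L(\lambda)/\prod_{j\neq s}(1-\bar y_j\lambda^{b_j})$, where $\bar y_s=y_s^{1/\mathfrak{m}}$ and, as one checks from the explicit reduction modulo $1-y_s^{1/\mathfrak{m}}\lambda^{a_s}$, each $\bar y_j$ for $j\neq s$ is an explicit monomial of the form $y_j^{\pm 1}y_s^{c_j/\mathfrak{m}}$ with $c_j\in\Z$. Step~4 reduces to recursive calls computing $\A_{1-\bar y_j\lambda^{b_j}}\bar F$; the inductive hypothesis applied with the $\bar y_k$'s in place of the $y_k$'s produces integer $\bar y$-exponents in the output. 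But because the $y_s$-row of the transition matrix from $\bar y$- to $y$-exponents has entries in $\frac{1}{\mathfrak{m}}\Z$, integer $\bar y$-exponents do \emph{not} automatically mean integer $y$-exponents; a congruence modulo $\mathfrak{m}$ on each output $\bar y$-exponent vector is additionally required. I would therefore strengthen the hypothesis to record exactly this congruence, namely membership of every output exponent vector in the sublattice of $\Z^n$ that is the preimage of $\Z^n$ under the transition.

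The tool for maintaining the strengthened hypothesis is the averaging identity $\A_{1-ux^a}F(x)=\frac{1}{a}\sum_{i=0}^{a-1}F(u^{-1/a}\xi^i)$ from the proof of Proposition~\ref{prop-key-transfer}: every application of $\A$ is a symmetric sum over a full orbit of roots of unity, which forces the surviving monomials in $\bar y_s=y_s^{1/\mathfrak{m}}$ to occur only in Galois-invariant combinations with the other $\bar y_j$'s. The main obstacle I anticipate is that a later recursion level introduces its own multiplier $\mathfrak{m}'$ and an even finer root $y_s^{1/(\mathfrak{m}\mathfrak{m}')}$, so the sublattice conditions accumulate and must be tracked simultaneously across all ancestors. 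A cleaner, geometric alternative is to appeal to the identity $\A_{1-y_s\lambda^{a_s}}F(\lambda;\y)=\sum_{\alpha\in\D_s^v\cap\Z^n}\y^\alpha$ stated in Section~1 and to argue that each branching of \texttt{DecDenu} realises a subdivision of $\D_s$ into subcones with integer generators; Stanley's formula applied to each piece then delivers the integer-exponent statement summand by summand, bypassing the congruence bookkeeping entirely.
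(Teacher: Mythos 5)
Your diagnosis of the difficulty is exactly right---the multiplier $\m$ introduces $y_s^{1/\m}$, and a naive induction fails because integer exponents in the barred variables need not give integer exponents in the original ones---but neither of your two routes actually closes the gap. The strengthened hypothesis (membership in a sublattice determined by the transition matrix) is stated but never verified: the averaging identity is a sum over the orbit of $a_s$-th roots of unity in $\lambda$ and does not by itself control the exponent of $y_s^{1/\m}$ in an individual branch, and, as you concede, each descendant node introduces its own multiplier, so the congruence conditions compound along every root-to-leaf path with no mechanism offered for discharging them. The geometric alternative has a sharper defect: the identity $\A_{1-y_s\lambda^{a_s}}F=\sum_{\alpha\in \D_s^v\cap\Z^n}\y^\alpha$ constrains only the \emph{total} sum, whereas the proposition is a claim about each individual output term; fractional exponents could a priori cancel between summands, and showing that each leaf of the recursion realises a subcone with integer generators is essentially the proposition itself, not a tool for proving it.

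The paper closes the gap with a single invariant that is insensitive to how many multipliers have accumulated. Each term is encoded as a matrix whose first row holds the $\lambda$-exponents and whose remaining rows hold the $y$-exponent vectors. Step 2 is the transformation $\gamma_{s,\m}$ of Definition \ref{def-tramat} (multiply the first row by $\m$ and the $s$-th column by $1/\m$), and Step 3 is a sequence of integer column operations $\epsilon_{s,l,h}$. Theorem \ref{theo-intmat} shows that integrality of all $2\times 2$ minors containing the first row is preserved by both operations: $\gamma_{s,\m}$ either fixes such a minor (when it uses column $s$) or multiplies it by the integer $\m$, and $\epsilon_{s,l,h}$ replaces it by an integer combination of two such minors. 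Since the input matrix is integral with first row of gcd $1$, at a leaf---where the first row is $(1,0,\dots,0)$ up to a column permutation---these minors are precisely the entries of the $y$-exponent block, which is Corollary \ref{cor-int}. If you want to salvage your induction, this is the congruence you were looking for: ``all $2\times 2$ minors through the $\lambda$-row are integers'' is the ancestor-independent form of your accumulating sublattice conditions.
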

\begin{proof}
We encode an Elliott rational function by a matrix by the following correspondence:
$$\frac{y^{\alpha_0}\lambda^{a_0}}{\prod_{i=1}^n (1-y^{\alpha_i} \lambda^{a_i})}   \mapsto  \begin{pmatrix}
  a_1 & a_2 &\cdots & a_n & a_0 \\
  \alpha_1 & \alpha_2 &\cdots & \alpha_n & \alpha_0
\end{pmatrix}.
$$
Subsequently, \texttt{DecDenu} can be applied to these matrices.

Step 2 corresponds to the transformation $\gamma_{s,\m}$ as defined in Definition \ref{def-tramat} below, and Step 3 corresponds to a sequence of operations $\epsilon_{s, l, h}$ for appropriate choices of $l$ and $h$ as per the same definition.

Each term in the output corresponds to a matrix whose first row consists entirely of zeros, except for a single $1$ in the first $n$ columns. By permuting the columns, we can assume that the sole $1$ is located in the first column without loss of generality. The proposition then follows from Corollary \ref{cor-int} below.
\end{proof}

\begin{defn}\label{def-tramat}
Define $\gamma_{s, m}$ as the transformation that modifies matrix $M$ by scaling its first row by a factor of $m$, and scaling its $s$-th column by a factor of $\frac{1}{m}$. Additionally, let $\epsilon_{s, l, h}$ denote the column operation that adds $h$ times the $s$-th column to the $l$-th column of $M$.
\end{defn}

\begin{theo}\label{theo-intmat}
Suppose $M$ is a matrix, and $\rho$ is either $\gamma_{s, m}$ or $\epsilon_{s, l, h}$, where $m$ and $h$ are integers. If all $k$-th order minors of $M$ that include the first row are integers, then the corresponding $k$-th order minors in $\rho(M)$ are also integers.
\end{theo}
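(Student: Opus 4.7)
The plan is to handle the two transformation types separately, using the observation that each $\rho$ can be expressed as multiplication of $M$ by a simple auxiliary matrix, so that the effect on a minor is controllable through the multilinearity of the determinant (or, equivalently, the Cauchy-Binet identity). In both cases the first row is preserved as a row, which ensures the hypothesis ``the minor includes the first row'' transfers cleanly to any minor of $M$ appearing on the right-hand side of the identities below.

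First I would treat $\rho = \gamma_{s,m}$. Write $\gamma_{s,m}(M) = D_r\, M\, D_c$, where $D_r = \mathrm{diag}(m,1,\dots,1)$ performs the first-row scaling, and $D_c$ is the diagonal matrix with entry $1/m$ in position $s$ and $1$ elsewhere. Then for any row set $I$ of size $k$ and column set $J$ of size $k$,
$$\det\bigl((\gamma_{s,m}(M))_{IJ}\bigr) = \Bigl(\prod_{i\in I}(D_r)_{ii}\Bigr)\Bigl(\prod_{j\in J}(D_c)_{jj}\Bigr)\det(M_{IJ}).$$
Under the hypothesis $1\in I$, the row factor equals $m$, while the column factor equals $1/m$ when $s\in J$ and $1$ when $s\notin J$. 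In either case the product collapses to either $\det(M_{IJ})$ or $m\cdot\det(M_{IJ})$, both integers by assumption.

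Next I would treat $\rho = \epsilon_{s,l,h}$, which is right-multiplication by the elementary matrix adding $h$ times column $s$ to column $l$. Fix a minor with row set $I\ni 1$ and column set $J$, and split on how $J$ relates to $\{l,s\}$. If $l\notin J$, the submatrix is unchanged. If $\{l,s\}\subseteq J$, the operation takes place inside the submatrix and so leaves its determinant invariant. If $l\in J$ but $s\notin J$, multilinearity of the determinant in the $l$-th column yields
$$\det\bigl((\epsilon_{s,l,h}(M))_{IJ}\bigr) = \det(M_{IJ}) + h\cdot\det(M_{IJ'}),$$
where $J' = (J\setminus\{l\})\cup\{s\}$, exhibiting the new minor as an integer combination of two $k$-th order minors of $M$, each of which still includes the first row and is therefore integral by hypothesis.

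I do not foresee any serious obstacle; the statement is a structured case analysis once the operations are interpreted as matrix multiplications. The one point worth being careful about is that the hypothesis is stated only for minors containing the first row, so each identity above must terminate in minors of $M$ with $1$ still in the row index set—which they do, since neither $\gamma_{s,m}$ nor $\epsilon_{s,l,h}$ alters the set of rows, only scales or recombines them.
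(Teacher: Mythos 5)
Your proof is correct and follows essentially the same route as the paper's: the trivial cases are disposed of by noting the row factor $m$ and column factor $1/m$ cancel (or don't arise), and the only substantive case, $l\in J$ with $s\notin J$, is handled by column multilinearity giving $\det(M_{IJ})+h\det(M_{IJ'})$ with $J'=(J\setminus\{l\})\cup\{s\}$, exactly as in the paper. Your packaging of $\gamma_{s,m}$ as $D_rMD_c$ is a minor cosmetic difference, not a different argument.
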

\begin{proof}
Consider matrix $M = (a_{ij})$, and let $M[I,J]$ be the submatrix of $M$ with row set $I=\{1,i_2,\ldots,i_k\}$ and column set $J=\{j_1,j_2,\ldots,j_k\}$.

Case $s \in J$: it is evident that for both $\gamma_{s, m}$ and $\epsilon_{s, l, h}$, the determinant $\det(\rho(M)[I,J])$ remains unchanged and equals $\det(M[I,J])$.

Case $s \not\in J$: for $\rho=\gamma_{s, m}$, $\rho(M)[I,J]$ is obtained from $M[I,J]$ by scaling the first row by $m$, resulting in $\det(\rho(M)[I,J])=m \det(M[I,J])$. For $\rho=\epsilon_{s, l, h}$, if $l \notin J$, then $\rho(M)[I,J]$ is identical to $M[I,J]$; if $l \in J$, then we have
\begin{align*}
\rho(M)[I,J] =
\begin{pmatrix}
 a_{1,j_1} & \cdots & h a_{1,s} +  a_{1,l} & \cdots &  a_{1,j_k} \\
a_{i_2,j_1} & \cdots & h a_{i_2,s} + a_{i_2,l} & \cdots & a_{i_2,j_k} \\
\vdots & \ddots & \vdots & \ddots & \vdots \\
a_{i_k,j_1} & \cdots & h a_{i_k,s} + a_{i_k,l} & \cdots & a_{i_k,j_k}
\end{pmatrix}.
\end{align*}
By splitting the column $l$ in $\rho(M)[I,J]$, we obtain $\det(\rho(M)[I,J]) = h \det(M[I,J']) +  \det(M[I,J])$, where $J'$ is obtained by replacing $l$ with $s$ in $J$. Since $\det(M[I,J'])$ and $\det(M[I,J])$ are both integers by assumption, the theorem is established.
\end{proof}

\begin{cor}\label{cor-int}
Let $M$ be an integer matrix with the gcd of its first row being $1$. If $M$ undergoes a sequence  of transformations $\rho_i$, each
being $\gamma_{s, m}$ or $\epsilon_{s, l, h}$, to yield the matrix
\begin{equation} \label{eq-transformedMat}
 \begin{pmatrix}
1 & 0 \\
\alpha & A
\end{pmatrix},
\end{equation}
then $A$ is also an integer matrix.
\end{cor}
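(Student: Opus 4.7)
The plan is to apply Theorem \ref{theo-intmat} iteratively along the given sequence of transformations, and then to read off each entry of $A$ as a $2\times 2$ minor of the final matrix that contains the first row.

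First, I would note that since $M$ is an integer matrix, every minor of $M$ is an integer; in particular, every $k$-th order minor of $M$ involving the first row lies in $\mathbb{Z}$. This furnishes the base case of an induction on the length of the transformation sequence $\rho_1,\ldots,\rho_r$. Supposing all $k$-th order minors through the first row of $\rho_{i-1}\circ\cdots\circ\rho_1(M)$ are integers, Theorem \ref{theo-intmat} applied to the single step $\rho_i$ asserts that the analogous minors of $\rho_i\circ\cdots\circ\rho_1(M)$ are also integers. Hence the final matrix
\[
N=\begin{pmatrix}1 & 0 \\ \alpha & A\end{pmatrix}
\]
has the property that every minor of $N$ containing its first row is an integer.

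To conclude that $A$ is an integer matrix, I would consider, for each entry $A_{ij}$, the $2\times 2$ minor of $N$ formed from rows $\{1,\,i+1\}$ and columns $\{1,\,j+1\}$. Since the first row of $N$ is $(1,0,\ldots,0)$, this minor equals
\[
\det\begin{pmatrix} 1 & 0 \\ \alpha_i & A_{ij}\end{pmatrix}=A_{ij},
\]
which is an integer by the preceding step. Hence every entry of $A$ is integral.

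I do not anticipate a genuine obstacle here: the heavy lifting has already been carried out in Theorem \ref{theo-intmat}, and the corollary essentially amounts to propagating its conclusion along the transformation sequence and then picking the right minors to isolate the entries of $A$. The only mild care needed is to verify that the inductive hypothesis — integrality of all minors through the first row — is exactly what Theorem \ref{theo-intmat} both requires and preserves, so the induction closes without slack. The assumption that the first row of $M$ has gcd $1$ does not enter this integrality argument directly; it serves only to guarantee that the reduced form with leading entry $1$ is actually reachable by the allowed transformations.
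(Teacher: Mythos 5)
Your proposal is correct and follows essentially the same route as the paper's proof: integrality of the $2$-order minors through the first row is established for $M$ and propagated along the transformation sequence via Theorem \ref{theo-intmat}, after which each entry $A_{ij}$ is recovered as such a minor of the final matrix. You merely make explicit the last step (that the minor on rows $\{1,i+1\}$ and columns $\{1,j+1\}$ equals $A_{ij}$ because the first row is $(1,0,\dots,0)$), which the paper leaves implicit.
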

\begin{proof}
Given that $M$ is integral, the $2$-order minors involving the first row in $M$ are integers. This property is preserved through
any transformation $\rho_i$ by Theorem \ref{theo-intmat}.
Therefore, the $2$-order minors involving the first row in the transformed matrix \eqref{eq-transformedMat}
are also integers, thus confirming the corollary.
\end{proof}

\subsection{Complexity Analysis}
\def\TB{T_\texttt{B}}
\def\TD{T_\texttt{D}}
In order to do complexity analysis, it is better to associate the algorithm
DecDCone a rooted tree $\TD$.

The root of $\TD$ is labeled by $(\ci{k};a_1',\dots, a_n')$, where $a_k'=a_k$ and $0\leq a_i'=[a_i]_{a_k}\leq a_k/2 $ for all $i\neq k$.
It suffices to construct the children of a general node $v$,
whose label is denoted $(\ci{s};v_1,\dots, v_n)$, where $0\leq v_i \leq v_s/2$ for all $i\neq s$, and $\gcd(v_1,\dots, v_n)=1$.
The index of $v$ is $\ind(v)=v_s$, and the dimension $\dim(v)$ of $v$ is the number of nonzero entries of $v$ minus $1$.
i) If $v_s=1$, then $v$ is of dimension $0$ and is called a leaf; ii) If $v_s>1$, then $\dim(v)\geq 1$ by $\gcd(v_1,\dots, v_n)=1$.
According to the recursion step 2.3.3 in Algorithm \texttt{DecDenu}, $v$ is an internal node with $\deg(v)=\dim(v)$ children, each of the form
$(\ci{j};b_1,\dots, b_n)$ of index $b_j$. It is possible that two nodes $v$ and $v'$ have the same label, and hence the two subtrees $T_v$ and $T_{v'}$
are isomorphic according to our construction, however, their corresponding rational functions are different.

It should be clear that the number of terms in the output of DecDCone is exactly the number $nl(\TD)$ of leaves of $\TD$. It is an important quantity to measure
the computation time of DecDcone. Further observe that we need to find a good valid multiplier $\m$ for each internal node of $\TD$. An internal node
is called an \emph{LLL-node} if LLL's algorithm is used to find $\m$. We will see that only a small portion of internal nodes are LLL-nodes.

It is hard to estimate $nl(\TD)$ for general $\TD$. As each node $v$ of $\TD$ has $\dim(v)\leq d$ children, it is natural to consider the \emph{depth} of $\TD$.
The depth $dep(v)$ of a node $v$ in $\TD$ is the number of edges in the unique path from $v$ to the root, and the depth of $\TD$ is
defined by $dep(\TD)=\max_{v\in T} \ dep(v)$.
Clearly, if $dep(\TD)=\ell$, then $nl(\TD)\leq d^\ell$, with equality holds only when $\TD=T^{d,\ell}$, the unique tree whose internal nodes have exactly $d$ children
and whose leaves have depth $\ell$. We call $T^{d,\ell}$ the complete $d$-ary rooted tree with a uniform depth $\ell$, i.e., each leaf has depth $\ell$.

However, it is hard to estimate the depth of $\TD$ either. Let $\bar{\TD}$ be similar to $\TD$ but ignore the coprime condition. Then we can show that $nl(\bar{\TD})$
is polynomially bounded. This is very similar to Barvinok's decomposition tree, so let us also construct the tree $\TB$ for Barvinok's decomposition.
We will see that the bound for $nl(\bar{\TD})$ is much smaller than the bound for $nl(\TB)$.

Let us estimate the depth $\ell=dep(\TB)$ for $\TB$ first. For any path $\K=\K_0\to \K_1 \to\cdots \to \K_\ell$ from the root to a leaf in $\TB$, we have
the inequality $\ind(\K_{i+1})\leq \ind(\K_i)^{\frac{d-1}{d}}$ for $0\leq i \leq \ell-1$. It follows that $\ind(\K_\ell) \leq \ind(\K_0)^{(\frac{d-1}{d})^\ell}$. If the right hand side is less than $2$ then $\K_\ell$ must be unimodular. This gives an upper bound of $dep(\TB)$ by
$$dep(\TB)\leq \left\lfloor 1+ \frac{\log\log\ind(\K_0)}{\log d -\log(d-1)} \right\rfloor,$$
and hence gives a polynomial bound in $\log\ind(\K_0)$ for $nl(\TB)$.

The depth $\ell=dep(\bar{\TD})$ for $\bar{\TD}$ is estimated in a similar way.
For any path $D_0\to D_1\to \cdots \to D_\ell$ from the root to a leaf in $\bar{\TD}$, where $D_i$ corresponds to a denumerant cone,
we have the inequality
$\ind(D_{i+1})\leq  \left\lfloor \frac{\ind(D_i)}{\lfloor (\ind(D_i)-1)^ \frac{1}{d}\rfloor } \right\rfloor$
  by Proposition \ref{p-existence}.
This is very similar to that for $\TB$, but we also have $\ind(D_{i+1})\leq \ind(D_i)/2$, which is better when $\ind(D_i)\leq 2^d$.
Thus $dep(\bar{\TD})$ is roughly bounded by
$$d+ \frac{\log\log\ind(\D_0)-\log \log(2^d)}{\log d- \log(d-1)}.$$

At the very beginning of this work (around 2011), we used $\ind(\K_0)=a_s^{d-1}$ under the assumption that $(a_s,a_i)=1$ for all $i\neq s$. Then
the bound of $dep(\TB)$ is huge when compared with the bound of $dep(\TD)$. This, together with computer data,
convinced us that DecDCone is much better than \texttt{LattE}, even though we are not able to show its polynomial complexity.
  Later, we learn that
LattE used polar trick, so that $\ind(\K)$ is replaced with $\ind(\K^*)$ which is usually just $a_s$, see \cite{baldoni2013coefficients}.
Then the bound for $dep(\bar{\TD})$ is less than the bound for $dep(\TB)$ by roughly $\frac{\log d}{ \log d -\log(d-1)} -d$, which is positive when
 $d\geq 5$ and is greater than $d$ if $d\ge 9$.

The above seems evident that DecDCone might be better than \texttt{LattE} even if $\TD$ has some \emph{bad} nodes, by which we mean that at this node, 
the candidates $\m$ found by LLL's algorithm or $\m=1$ can not reduce the index $a_s$ to be smaller than $C a_s^{\frac{d-1}{d}}$ for a suitable constant $C$. 
However, \texttt{LattE} performs much better than the estimated bound for denumerant cones.
For instance, computer experiments support that $nl(\TD)=nl(\TB)$ if we start with the denumerant cone of index $\leq 5$. In particular, when the index $a_s=3$, $dep(\TB)$ is always $1$ and
$nl(\TB)$ is always $d$. This means that the decrease from index $3$ to $2$ never happens, while the suggested bound is $3^{(d-1)/d}$, which is $>2.498$ if $d\geq 6$.

Bad nodes in $\TD$ cannot be avoided so $nl(\TD)$ is hard to estimate. For instance, consider the node $v=(\ci{1}; 2a, v_2, v_3,\dots, v_n)$ with $v_2=a$. Then $[\m a_2]_{2a}=a $ whenever $(\m,2a)=1$. This means $v$ must have a child $w$ of index $a$.
The $w$ is of the form $(\ci{2}; 2a,a,w_3,\dots,w_n)\to (\ci{2}; 0,a,w_3,\dots,w_n)$, and hence we have a dimension decrease $\dim w\leq \dim v-1$.

Though $\TD$ has bad nodes while $\TB$ has not, $\TD$ has some nicer properties than $\TB$.
\begin{enumerate}
\item In $\TD$, a node of index $a_s\leq 2^d$ has children of indices at most $a_s/2$, while in $\TB$ a node of index $\ind(\K)$ has children of indices at most
$\ind(\K)^{\frac{d-1}{d}}$, which is not less than $\ind(\K)/2$ when $\ind(\K)\leq 2^d$.

\item Each node in $\TB$ corresponds to a $d$ dimensional simplicial cone, while each node $v$ in $\TD$ corresponds to a denumerant cone of dimension $\dim v$.

\item If $w$ is a child of $v$ in $\TD$, then $\dim w\leq \dim v$, while all nodes in $\TB$ have dimension $n-1$.
\end{enumerate}

In $\TB$, the decrease of the dimension happens when new $0$'s appear.
If the index is reduced to be smaller than $2d$,
 then the decrease of the dimension is guaranteed by the pigeonhole principle.

Now we turn to the second important quantity of DecDCone: the number $nl^3(T)$ of LLL-nodes in $T$. This is because LLL's algorithm in a $d$ dimensional lattice has an average complexity $O(d^3\log(d))$ \cite{cl2013LLL}. In all the other steps, only simple arithmetic operations are performed.
This is very different from the $\TB$ for \texttt{LattE}.

\begin{enumerate}
\item All internal nodes in $\TB$ are LLL-nodes while there is only a small portion of internal nodes in $\TD$ are LLL-nodes.

\item Matrix inverse is needed for each internal node of $\TB$, but is never needed in $\TD$.
\end{enumerate}
The second item is obvious as described in Section \ref{sec_BarAlg} and Subsection \ref{subsec_DecDenu}. For the the first item, we use Proposition \ref{prop_less13}  in the next subsection,
which says that only nodes of indices $\geq 14$ are LLL-nodes.
Since every leaf has an index $1$, most nodes close to a leaf are not LLL-nodes. More precisely, for a node of index $\ge 14$, its children
can hardly be all leaves. Take $T^{d,\ell}$ as an example. It has $d^0+\cdots+d^{\ell-1}$ internal nodes, but nodes of depth $\ell-1$ and $\ell-2$ can hardly be LLL-nodes. Thus the number of LLL-nodes is probably
less than $d^0+\cdots+d^{\ell-3}$.

The above argument is supported by computer experiments in Subsection \ref{subsec_CompExp}. Table \ref{tab-highdim} records some random examples with dimensions from 8 to 14. The computer reports that only about $4\%$ of the internal nodes are LLL-nodes. As a result,
even if $nl(\TD)>nl(\TB) $, the computation time for DecDCone may still be less than that of \texttt{LattE}. See Table \ref{table_result_cuww}.

\subsection{The optimal  valid multiplier when $a_s<14$ }
In order to have a better understanding of $nl(\TD)$, we introduce the rooted tree $\TD^1$, which is similar to $\TD$
except that we choose $\m=1$ at each internal node. This corresponds to the \texttt{CTEuclid} algorithm \cite{xin2015euclid}.
Suppose the roots of $\TD^1$ and $\TD$ both have label $(\ci{s};a_1,\dots,a_n)$.

 Denote by $f^1(\ci{s};a_1,\dots,a_n)=nl(\TD^1)$, $f^{m}(\ci s;a_1,\dots,a_n)=nl(\TD)$.
The superscripts correspond to our choice of $\m$ at each internal node. Then the $f^h$ (with $h= 1$ or $m$)  is recursively determined by the following rules:

\begin{enumerate}
\item $f^h(\ci{s};a_1,\dots, a_n)=f^h(\ci 1;a_s,b_1,\dots, b_{n-1})$ where the $b$'s can be any permutation of $a_1,\dots, a_{s-1}, a_{s+1},\dots, a_n$.

\item Zero entries can be omitted, and the initial value is $f^h(\ci{1}; 1)=1$.

\item Find a multiplier $\m$ according to $h$: $\m=1$ if $h=1$; $\m$ is the optimal valid multiplier if $h=m$. Then
 $$f^h(\ci s; a_1,a_2,\dots,a_n)=f^h(\ci s; b_1,b_2,\dots,b_n)$$
 where $b_s=a_s$ and $b_i=[\m a_i]_{a_s}$ if $i\neq s$.

  \item Now $0\leq b_j=[\m a_j]_{a_s} \leq a_s / 2$ for all $j \neq s$ as above. We have
        $$f^h(\ci s; a_1,a_2,\dots,a_n) = \sum_{j\neq s, b_j\neq 0} f^h(\ci j; b_1,b_2,\dots,b_n).$$
\end{enumerate}
Observe that the recursion occurs at the fourth rule, which turns a term of index $a_s$ into a sum of
at most $d=n-1$ terms of smaller indices.

For $f^1$ we only have the obvious bound $f^1(\ci s; a_1, a_2, \dots, a_n) < (n-1)^{\log a_s}$, since in each recursion, the index $a_s$
is reduced by half. Computer experiment suggests that $f^1(\ci s; a_1, a_2, \dots, a_n)$ is not polynomial.

\begin{exam}\label{ex-12}
Let us revisit Example \ref{exa-7} but focus on the number of the output terms. This will illustrate the distinction between $f^m$ and $f^1$.
By using $\m=2$, we obtain
\begin{eqnarray*}
  f^m(\ci 1;7,2,3) &\overset{\m = 2}{=}& f^m(\ci 1;7,3,1)\\ %
   &=&f^{m}(\ci{2};7,3,1)+f^{m}(\ci{3};7,3,1)\\
   &=&f^{m}(\ci{2};1,3,1)+1 =2+1=3,
\end{eqnarray*}
which agrees with the solution for Example \ref{exa-7}. While using $\m=1$ gives
\begin{eqnarray*}
  f^1(\ci 1;7,2,3) &=& f^1(\ci 2;7,2,3)+ f^1(\ci 3;7,2,3) \\
   &=& f^1(\ci 2;1,2,1)+ f^1(\ci 3;1,1,3)\\
   &=&2+2=4.
\end{eqnarray*}
\end{exam}
Now we give explicit formulas
of $f^1(\ci 1;a_1,\dots, a_n)$ for $a_1\leq 13$,
and use them to find the optimal valid multiplier $\m$ for $f^{m}(\ci 1;a_1,\dots, a_n)$ when $a_1\leq 13$.
When $a_1$ is fixed, we encode $(a_2,\dots,a_n)$ as
$(1^{r_1}, \dots, k^{r_k})$, where $k=\lfloor a_1/2\rfloor$, meaning that there are $r_1$ $1$'s, $r_2$ $2$'s, and so on.
The recursion can be rewritten as
$$ f^1(\ci 1;a_1,1^{r_1}, \dots, k^{r_k})=\sum_{i=1}^k r_i f^1(\ci 1; i, a_1, 1^{r_1},\dots, (i-1)^{r_{i-1}},(i+1)^{r_{i+1}},\dots, k^{r_{k}}).$$

The following formulas for $a_1<14$ are easy to compute.
\begin{align*}
 &f^1(\ci 1;2,1^{r_1}) =f(1;3,1^{r_1})=r_1 ,\\
 &f^1(\ci1;4, 1^{r_1}, 2^{r_2}) =r_1 r_2+r_1 ,\\
 &f^1(\ci1;5, 1^{r_1}, 2^{r_2})= r_1 r_2+r_1+r_2, \\
 &f^1(\ci1;6, 1^{r_1}, 2^{r_2}, 3^{r_3})=r_1 (r_2+r_3+1)+2r_2 r_3, \\
 &f^1(\ci1;7, 1^{r_1}, 2^{r_2}, 3^{r_3}) =r_1 (r_2+r_3+1)+(2 r_3+1) r_2+r_3, \\
 &f^1(\ci1;8, 1^{r_1}, 2^{r_2}, 3^{r_3}, 4^{r_4})=(r_4+1) (r_2+1) (r_1+r_3)+(r_4+r_2+r_1) r_3 ,\\
 &f^1(\ci1;9, 1^{r_1}, 2^{r_2}, 3^{r_3}, 4^{r_4})=(r_1+1+r_3) (r_2 r_4+r_2+r_4)+r_1+(r_4+r_2+r_1) r_3,  \\
 & f^1(\ci1;10, 1^{r_1},  2^{r_2},  \dots, 5^{r_5}) = r_1 ((r_2+2) r_4+(r_2+r_3+1) (r_5+1))+3 r_4 (r_3+r_5) \\
                                               & \qquad +(r_4+2) (r_5+r_2) r_3+2 r_5 (r_4+1) r_2 +r_3, \\
 &f^1(\ci1;11, 1^{r_1},  2^{r_2}, \dots, 5^{r_5})=r_1+(r_4r_2+r_2 +r_4) (r_1+1+r_3+r_5)\\
                    & \qquad+r_3 (r_1+r_2+1+r_4+r_5)+r_5 ((r_1+r_4+2) (r_2+r_3+1)-1) ,\\
 &f^1(\ci1;12, 1^{r_1},2^{r_2}, \dots, 6^{r_6})=((r_6+1) (r_2+r_3+1)+(r_2+2 r_6+1) r_4) r_1 \\
                     & \qquad+((r_4+2 r_6+2) r_2+r_4 (3 r_6+2)) r_3 \\
                        & \qquad+ ((r_2+r_3+2) r_1+(r_4+2 r_6+2) (r_2+r_3) +r_4 r_2+ (2 r_6+3) r_4+3 r_6+1) r_5, \\
&f^1(\ci1;13, 1^{r_1},2^{r_2}, \dots, 6^{r_6})=r_1+r_2 (r_1+1+r_3+r_5)+r_3 (r_1+r_2+1+r_4+r_5)\\
                 & \qquad+r_4 (r_1+1+r_3+r_5+(r_2+r_6) (r_1+1+r_3+r_5))\\
                & \qquad+r_5 (r_1+r_4+r_6+(r_2+r_3+1) (r_1+r_4+r_6+1))\\
               & \qquad+r_6 (r_1+r_5+1+(r_2+r_4) (r_1+1+r_3+r_5)+r_3 (r_1+r_2+1+r_4+r_5)).
\end{align*}

\begin{lem}\label{lem_leq6}
If $a_1\leq 6$, then we have $f^m(\ci1;a_1,1^{r_1}, \dots, k^{r_k})=f^1(\ci1;a_1,1^{r_1}, \dots, k^{r_k})$.
\end{lem}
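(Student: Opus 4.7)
The plan is to induct on the circled index $a_s$, exploiting the fact that rules (3) and (4) of the recursion strictly decrease the circled index (from $a_s$ to some $b_j \le a_s/2 < a_s$) at every recursive step. Since the hypothesis is $a_1 \le 6$, every node appearing in the recursion tree has its circled index in $\{1,2,3,4,5,6\}$, so it suffices to prove the identity for circled index $a_s \in \{1,\dots,6\}$.

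The base case $a_s=1$ is the initial value $f^h(\ci 1;1)=1$. For the inductive step, I would split according to the set of valid multipliers. The count of valid multipliers in the range $1\le\m\le a_s/2$ is, by Euler's totient and the remark following Proposition~\ref{p-existence}, exactly $1$ for $a_s\in\{2,3,4,6\}$ (the valid multipliers being only $\m=1$) and exactly $2$ for $a_s=5$ (namely $\m=1$ and $\m=2$). So in the cases $a_s\in\{2,3,4,6\}$ there is nothing to choose: $f^m$ uses $\m=1$ at this node, agreeing with $f^1$; every sub-call produced by rule (4) has circled index $b_j\le a_s/2<a_s$, so the inductive hypothesis makes the sub-call values coincide, and summing gives $f^m=f^1$.

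The only real content is in the case $a_s=5$ when the optimal valid multiplier happens to be $\m=2$. Here $[2\cdot 1]_5=2$ and $[2\cdot 2]_5=1$, so the multiplier action is the transposition $1\leftrightarrow 2$ on the entry values: the transformed configuration $(5,1^{r_1},2^{r_2})\mapsto(5,2^{r_1},1^{r_2})$ is, after the permutation allowed by rule (1), the same as $(5,1^{r_2},2^{r_1})$. All sub-calls generated next have circled index in $\{1,2\}$, so by the inductive hypothesis their $f^m$ values equal their $f^1$ values. Therefore the computation of $f^m(\ci 1;5,1^{r_1},2^{r_2})$ via $\m=2$ equals $f^1(\ci 1;5,1^{r_2},2^{r_1})=r_1r_2+r_1+r_2$, using the explicit formula listed earlier. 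That formula is symmetric in $(r_1,r_2)$, hence equals $f^1(\ci 1;5,1^{r_1},2^{r_2})$, closing the induction.

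The main obstacle, then, is the case $a_s=5$. The verification ultimately rests on two facts that must be pinpointed: (i) the arithmetic identity that $\m=2$ modulo $5$ permutes the possible reduced entries $\{1,2\}$, and (ii) the $(r_1,r_2)$-symmetry of the closed-form expression for $f^1(\ci 1;5,1^{r_1},2^{r_2})$. Neither of these phenomena persists in general, which is what restricts the lemma to $a_1\le 6$; one should expect $f^m<f^1$ for the first time at $a_s=7$, consistent with Example~\ref{ex-12}.
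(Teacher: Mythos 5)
Your proposal is correct and follows essentially the same route as the paper: observe that $\m=1$ is the only valid multiplier for $a_1\in\{2,3,4,6\}$, and for $a_1=5$ note that $\m=2$ merely swaps the entries $1\leftrightarrow 2$ while the explicit formula $f^1(\ci1;5,1^{r_1},2^{r_2})=r_1r_2+r_1+r_2$ is symmetric in $(r_1,r_2)$. The only difference is that you make explicit the induction on the circled index (justifying that all sub-calls coincide), which the paper leaves implicit.
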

\begin{proof}
When $a_1 \leq 4$ or $a_1=6$, the only valid choice for $m$ is $1$, thus $f^m$ reduces to $f^1$.

When $a_1=5$, the valid multipliers are $1$ and $2$. Utilizing $1$ yields $f^1(\ci1;5, 1^{r_1}, 2^{r_2})$, while employing $2$ results in $f^1(\ci1;5, 2^{r_1}, 1^{r_2}) = f^1(\ci1;5, 1^{r_1}, 2^{r_2})$ according to the explicit formula.
\end{proof}

\begin{prop}\label{prop_less13}
If $a_1 \leq 13$, then the function $f^m(\ci1;a_1,1^{r_1}, \dots, k^{r_k})$ can be computed efficiently, with the optimal choice of $\m$ determined through a few comparisons.
\end{prop}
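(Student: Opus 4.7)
The plan is to show that for every $a_1\le 13$ a single top-level application of the key transformation reduces the computation to evaluating one of the explicit formulas tabulated above, so that the optimal root multiplier is picked by evaluating a handful of polynomial expressions in the $r_i$'s and comparing them.

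First I would verify that whenever $(\m,a_1)=1$, the map $\sigma_\m\colon i\mapsto [\m\, i]_{a_1}$ is a bijection of $\{1,2,\dots,k\}$ onto itself, where $k=\lfloor a_1/2\rfloor$. Injectivity is clear: $\m i_1\equiv \pm \m i_2\pmod{a_1}$ forces $i_1=i_2$, since in the ``$-$'' case one would need $i_1+i_2=a_1$, which for $i_1,i_2\le k$ is possible only when $a_1$ is even and $i_1=i_2=a_1/2$. Surjectivity then follows by finiteness. Consequently Step 3 of \texttt{DecDenu} applied at the root with multiplier $\m$ produces the relabelled configuration $(\ci{1};\,a_1,\, 1^{r_{\sigma_\m^{-1}(1)}},\dots,k^{r_{\sigma_\m^{-1}(k)}})$.

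Next I would invoke Lemma \ref{lem_leq6}: because $k\le 6$ for every $a_1\in\{7,\dots,13\}$, every child produced by Step 4 at the root has top index $\le 6$, so $f^m$ and $f^1$ coincide on all such children regardless of what multiplier the algorithm chooses further down the tree. Therefore the value of $f^m$ at the root with chosen root-multiplier $\m$ equals
\[
f^1\bigl(\ci{1};\,a_1,\, 1^{r_{\sigma_\m^{-1}(1)}},\dots,k^{r_{\sigma_\m^{-1}(k)}}\bigr),
\]
a polynomial in the $r_i$'s given explicitly by the tables preceding the statement. Example \ref{ex-12} is precisely the case $a_1=7$, $\m=2$, where $\sigma_2=(1\,2\,3)$ sends $(r_1,r_2,r_3)\mapsto (r_3,r_1,r_2)$ and the output count drops from $4$ to $3$.

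Finally, the set of valid multipliers in $[1,a_1/2]$ has size $\phi(a_1)/2\le \phi(13)/2=6$. So the optimal root multiplier is determined by evaluating at most six of the tabulated formulas on the corresponding permuted inputs and comparing them; this is what ``a few comparisons'' means, and it makes the evaluation of $f^m(\ci{1};a_1,1^{r_1},\dots,k^{r_k})$ run in polynomial time in the bit sizes of the $r_i$'s. The only subtle step is the bookkeeping identifying ``Step 3 at the root with multiplier $\m$, then Step 4, then evaluate each child with $f^1$'' with ``$f^1$ evaluated directly on the permuted input''; this is immediate once one notes that the root step of $f^1$ itself corresponds to the identity permutation $\sigma_1$, so the two procedures differ only by the initial relabelling, and they agree on all subtrees by Lemma \ref{lem_leq6}.
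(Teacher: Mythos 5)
Your proposal is correct and follows essentially the same route as the paper: reduce to the single top-level choice of $\m$ via Lemma \ref{lem_leq6}, observe that each valid $\m$ merely permutes the exponents $r_i$ (so that $g_\m$ is $f^1$ evaluated on a permuted input), and compare at most $\phi(a_1)/2\le 6$ tabulated polynomials. The only difference is cosmetic: you prove the permutation claim in general via the bijection $i\mapsto[\m i]_{a_1}$ on $\{1,\dots,\lfloor a_1/2\rfloor\}$, whereas the paper verifies it case by case and goes on to record the explicit sign conditions (e.g.\ $g_1-g_3=-(r_1-r_3)r_2$ for $a_1=7$) that your argument leaves implicit.
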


\begin{proof}
Given that $a_1 \leq 13$, it follows that any choice of the multiplier $\m$ will result in an index that does not exceed 6. By Lemma \ref{lem_leq6}, we only need to make the initial choice of $\m$, and there are at most 6 possible choices for $\m$. Let $g_\m$ denote the value obtained by employing the multiplier $\m$. We shall select the optimal multiplier $\m_0$ to be $\m_0=i$ if $g_i$ is the smallest among all valid multipliers $\m$.

For some specific values of $a_1$, we can derive simple conditions. We illustrate the case $a_1 = 7$ in detail, as the other cases follow a similar pattern. In this case, the optimal multiplier $\m_0$ must be chosen from the set $\{1,2,3\}$. We have:
\begin{align*}
g_1 &= f^1(\ci1;7, 1^{r_1}, 2^{r_2},3^{r_3}), \\
g_2 &= f^1(\ci1;7, 2^{r_1}, 4^{r_2},6^{r_3}) = f^1(\ci1;7, 1^{r_3},2^{r_1},3^{r_2}), \\
g_3 &= f^1(\ci1;7, 3^{r_1}, 6^{r_2},9^{r_3}) = f^1(\ci1;7, 1^{r_2},2^{r_3},3^{r_1}).
\end{align*}
Thus, by the explicit formula of $f^1(\ci1;7, 1^{r_1}, 2^{r_2},3^{r_3})$,  we obtain:
\begin{align*}
g_1 - g_3 &= f^1(\ci1;7, 1^{r_1}, 2^{r_2},3^{r_3}) - f^1(\ci1;7, 1^{r_3},2^{r_1},3^{r_2}) = -(r_1 - r_3)r_2, \\
g_1 - g_2 &= f^1(\ci1;7, 1^{r_1}, 2^{r_2},3^{r_3}) - f^1(\ci1;7, 1^{r_2},2^{r_3},3^{r_1}) = -(r_1 - r_2)r_3.
\end{align*}
If both of these differences are negative, i.e., $r_1 = \max(r_1, r_2, r_3)$, then the optimal multiplier $\m_0$ is $1$. The conditions for $\m_0$ being $2$ or $3$ can be similarly derived.
In summary, assuming $r_i = \max(r_1, r_2, r_3)$, then if $i=1$ then $\m_0=1$; if $i=2$ then $\m_0=3$; if $i=3$ then $\m_0=2$.

For $a_1 = 8$, the optimal $\m_0$ must be chosen from $\{1,3\}$. We shall choose $\m_0 = 3$ only when $r_1 < r_3$.

For $a_1 = 9$, the optimal $\m_0$ must be chosen from $\{1,2,4\}$. Assume $r_i = \max(r_1, r_2, r_4)$. Then if $i=1$, then $\m_0 = 1$; if $i=2$ then $\m_0=4$; if $i=4$ then $\m_0=2$.

For $a_1 = 10$, the optimal $\m_0$ must be chosen from $\{1,3\}$. We shall choose $\m_0=3$ only when $( r_1 + r_2 - r_3 - r_4 ) r_5 + 2r_1 r_2 - 2r_3 r_4 < 0$.

For $a_1 = 12$, the optimal $\m_0$ must be chosen from $\{1,5\}$. We shall choose $\m_0=5$ only when $r_1 < r_5$.

For $a_1 = 11$ or $13$, we cannot derive simple conclusions as in the other cases.
\end{proof}

%
%It is well known that for $n=2$, Popoviciu's theorem\cite[Theorem 1.5]{beck2007computing} states
% $$d(a_0; a_1,a_2) = \frac{a_0}{a_1 a_2} - \left\{ \frac{b^{-1} n}{a} \right\} -\left\{\frac{a^{-1} n}{b}\right\}+1,$$
% where the fractional part function is denoted by $\{x\}=x- \lfloor x \rfloor $, with $a_1^{-1} a_1 \equiv 1 \mod a_2 $ and $a_2^{-1} a_2 \equiv 1 \mod a_1$.
%
%For $n=3$, there is no explicit formula. However, several algorithms exist, and our \texttt{DecDenu} also provides a polynomial-time algorithm.

\begin{theo}
Suppose $a_1, a_2, a_3$ are pairwise coprime. Then $f^m(\ci i; a_1, a_2, a_3) =  \lfloor\log(a_i)\rfloor $, and consequently $d(a_0; a_1, a_2, a_3)$ can be computed in polynomial time.
\end{theo}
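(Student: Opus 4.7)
The plan is to bound $f^m(\ci i; a_1, a_2, a_3)=nl(\TD)$ by $O(\log a_i)$ and then deduce polynomial-time computability of the denumerant via the short-sum representation \eqref{sec1_sum_of_function}. Since $n=3$, every internal node of $\TD$ has at most $d=n-1=2$ children, so $\TD$ is a binary tree. Pairwise coprimality of $(a_1,a_2,a_3)$ together with the fact that any valid multiplier $\m$ at an index-$a$ node satisfies $(\m,a)=1$ implies $\m a_j\not\equiv 0\pmod a$ for both non-index entries; hence no branch collapses via a $b_j=0$ shortcut and in the worst case $\TD$ really is a full binary tree.

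Applying Proposition \ref{p-existence} with its parameter set to $2$ (there are exactly two non-index entries to reduce at each step) gives a good valid multiplier $\m$ at every internal node with $[\m a_j]_a \leq \lfloor a/\lfloor (a-1)^{1/2}\rfloor \rfloor = O(\sqrt{a})$ for $j\neq s$. Thus each child of an index-$a$ node has index $O(\sqrt{a})$, and the index along any root-to-leaf path decays doubly exponentially: $a\mapsto\sqrt{a}\mapsto a^{1/4}\mapsto\cdots$. After $\ell=O(\log\log a_i)$ reductions the index drops below $2$ and the node becomes a leaf, so $nl(\TD)\leq 2^\ell = O(\log a_i)$. Small-index base cases where Proposition \ref{p-existence} is vacuous (namely $a\leq 4$) are handled by Lemma \ref{lem_leq6}, and any loss of pairwise coprimality further down in $\TD$ can only force some later $b_j=0$, which only decreases $f^m$.

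Polynomial-time computability of $d(a_0;a_1,a_2,a_3)$ then follows: the identity $\CT_\lambda F(\lambda;\y)=\sum_{s=1}^{3}\A_{1-y_s\lambda^{a_s}}F(\lambda;\y)$ combined with the bound above expresses the denumerant's generating function as a short sum of $O(\log a_i)$ rational functions in $\y$; each LLL call at an internal node runs in polynomial time and there are $O(\log a_i)$ such calls; finally specializing $\y=(1,1,1)$ in each summand is polynomial via the Todd-operator method of \cite{XinTodd2023}.

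The main obstacle is the claimed \emph{exact} equality $f^m(\ci i; a_1, a_2, a_3)=\lfloor\log(a_i)\rfloor$. The argument above only delivers the $O(\log a_i)$ upper bound; sharpening to the precise floor-of-log value requires showing that the \emph{optimal} (not merely good) valid multiplier essentially halves the bit-length of the index at each step, while both children of every internal node remain genuine. A careful analysis of $\gcd(b_2,b_3)$ via the congruence $b_2 b_3 \equiv \m^2 a_2 a_3 \pmod a$, combined with the Smith-normal-form invariants preserved under the operators $\gamma_{s,\m}$ and $\epsilon_{s,l,h}$ of Definition \ref{def-tramat}, appears to be the right route; this seems to be the only subtle step, since the polynomial-time consequence is already implied by any $\mathrm{polylog}(a_i)$ bound.
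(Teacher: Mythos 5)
There is a genuine gap, and you have correctly located it yourself: your argument yields only an $O(\log a_i)$ upper bound, while the theorem asserts the exact value $\lfloor\log a_i\rfloor$. The idea you are missing is that pairwise coprimality should be used to \emph{choose} the multiplier, not merely to rule out collapsed branches. Since $\gcd(a_3,a_1)=1$, take $\m=u$ with $a_3u\equiv 1\pmod{a_1}$; this $u$ automatically satisfies $\gcd(u,a_1)=1$ (it is an inverse mod $a_1$), so it is valid, and it sends the entry $a_3$ to $b_3=[ua_3]_{a_1}=1$. The resulting node $(\ci{1};a_1,b_2,1)$ then recurses as a caterpillar rather than a full binary tree: at every level the child coming from the entry $1$ has index $1$ and is immediately a leaf, while the other child's index at least halves, so one obtains a path of length about $\log_2 a_1$ with one pendant leaf per internal node, i.e.\ exactly $\lfloor\log a_1\rfloor$ leaves. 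Your worst-case assumption that the tree is a full binary tree of depth $O(\log\log a_i)$ is precisely what the paper's choice of multiplier avoids.

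A second, smaller flaw: you invoke Proposition \ref{p-existence} to produce a ``good valid multiplier,'' but that proposition only guarantees a good multiplier $1\le k\le a/2$; it says nothing about $(k,a)=1$, and the paper explicitly warns that validity of a good multiplier cannot be guaranteed. So even your $O(\log a_i)$ bound is not fully justified as written; in this three-variable coprime setting it is repaired exactly by the inverse-multiplier trick above. The polynomial-time consequence is then routine, as you say, via the specialization $\y=1$ using \cite{XinTodd2023}.
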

\begin{proof}
Consider $f^m(\ci 1; a_1, a_2, a_3)$, and analogously for $i=2,3$. Since $\gcd(a_1, a_3) = 1$, there exist $u, v$ such that $a_1 v + a_3 u = 1$ and $\gcd(u, a_1) = 1$. Thus, by setting the valid multiplier $k = u$, we have
\begin{align*}
f^m(\ci 1; a_1, a_2, a_3) \overset{k=u}=& f^m(\ci 1; a_1, b_2, 1);
\end{align*}
where $b_2 = [a_2u]_{a_1}$. The recursion for $f^1(\ci 1; a_1, b_2, 1)$ is easy derived: $f^1(\ci 1; a_1, b_2, 1) = f^1(\ci 2; b_1, b_2, 1) + 1 = \cdots = \lfloor \log(a_1) \rfloor$.
An illustrative example is provided in Example \ref{exa-7} and \ref{ex-12}. By specializing $y_i = 1$, we obtain $d(a_0; a_1, a_2, a_3)$, which can also be computed in polynomial time, as shown in \cite{XinTodd2023}.
\end{proof}

\subsection{Computer Experiments} \label{subsec_CompExp}
This section presents the outcomes of our computational experiments aimed at decomposing denumerant cones $D_1(a_1,\dots, a_n)$ of index $a_1$. We utilized two C++ packages: \texttt{LattE 1.7.5} and our own \texttt{DecDenu}. The results are multivariate rational generating functions corresponding to unimodular cones. We report the computation time and the number of terms $nl(\TD)$ and $nl(\TB)$ in the output.

In Table \ref{tab-highdim}, we list two random knapsack cones for each $9\leq n \leq 15$. Notably, there is only one instance (when $n=10$ and boldfaced) for which $nl(\TD)> nl(\TB)$.

{\tiny
\begin{table}[h]
  \centering

    \begin{tabular}{r|p{0.5cm}p{0.5cm}p{0.5cm}p{0.5cm}p{0.5cm}p{0.5cm}p{0.5cm}p{0.5cm}p{0.5cm}p{0.5cm}p{0.5cm}p{0.5cm}p{0.5cm}p{0.5cm}p{0.5cm}|r|r}
   $n$ & \multicolumn{15}{c|}{Knapsack cone $\D_1(a_1,\dots a_n)$} & $nl(\TD)$ & $nl(T_B)$ \\ \hline
    9  & \textbf{1285} & 2549 & 2209 & 2402 & 2018 & 2789 & 1181 & 2369 & 121 &    &    &    &    &    &    & 10342 & 11341 \\
    9  & \textbf{1565} & 2594 & 2882 & 2988 & 2876 & 544 & 1621 & 740 & 2372 &    &    &    &    &    &    & 11063 & 12808 \\
    10 & \textbf{422} & 1980 & 2478 & 1360 & 2179 & 1992 & 2857 & 1326 & 78 & 2421 &    &    &    &    &    & \textbf{8117} & 7701 \\
    10 & \textbf{2937} & 600 & 2895 & 538 & 584 & 2175 & 1636 & 2942 & 1905 & 509 &    &    &    &    &    & 40591 & 50123 \\
    11 & \textbf{681} & 640 & 1082 & 2115 & 2937 & 965 & 2690 & 1572 & 701 & 596 & 224 &    &    &    &    & 22747 & 25557 \\
    11 & \textbf{1576} & 2362 & 226 & 2059 & 2078 & 2694 & 1824 & 1320 & 1908 & 2968 & 1547 &    &    &    &    & 29749 & 40456 \\
    12 & \textbf{1439} & 799 & 2358 & 241 & 743 & 2370 & 2188 & 1713 & 1114 & 783 & 922 & 1124 &    &    &    & 75889 & 89577 \\
    12 & \textbf{2934} & 1928 & 2894 & 1687 & 2542 & 2633 & 662 & 2545 & 1184 & 1250 & 2357 & 1539 &    &    &    & 106726 & 155285 \\
    13 & \textbf{951} & 1249 & 1796 & 2396 & 1838 & 728 & 930 & 1266 & 196 & 2353 & 701 & 1906 & 1301 &    &    & 56259 & 71119 \\
    13 & \textbf{2097} & 1020 & 2525 & 628 & 1080 & 581 & 2709 & 1322 & 149 & 1125 & 2309 & 1210 & 1878 &    &    & 291075 & 501261 \\
    14 & \textbf{1300} & 1340 & 2934 & 1188 & 1696 & 1716 & 67 & 167 & 2390 & 950 & 1218 & 1201 & 2757 & 2584 &    & 256285 & 314738 \\
    14 & \textbf{2800} & 631 & 608 & 2136 & 2925 & 163 & 628 & 1387 & 1337 & 2370 & 2226 & 2562 & 1550 & 739 &    & 833283 & 999965 \\
    15 & \textbf{758} & 148 & 1880 & 43 & 281 & 2169 & 528 & 243 & 1589 & 1187 & 1145 & 290 & 268 & 2643 & 317 & 215849 & 428843 \\
    15 & \textbf{2841} & 438 & 775 & 2129 & 2919 & 1284 & 1374 & 613 & 2917 & 674 & 1740 & 843 & 1834 & 1314 & 924 & 1349790 & 1393263 \\\hline
    \end{tabular}%
    \caption{Knapsack cones with dimensions ranging from 8 to 14. \label{tab-highdim}}
\end{table}%
}
% Table generated by Excel2LaTeX from sheet 'Cuww&Prob

For a comparative analysis, we list several hard knapsack problems from \cite{aardal2002hard} in Table \ref{table_hardknapsack}. The first column provides the instance names, with the data for instances cuww1-3 omitted due to their negligible computation times. Instances cuww4-5, and prob1-10 are specifically crafted to possess the structure $a_i = Mp_i + Nr_i$ for certain integers $M$ and $N$, and for short integer vectors $p=(p_1,\dots, p_n)$ and $r=(r_1,\dots, r_n)$. The far-right column specifies the Frobenius number $F(a_1,\dots, a_n)$. It is a notoriously difficult task for many algorithms, including the standard branch and bound approach, to confirm that $d(F;a_1,\dots, a_n)=0$. Instances prob11-20 represent random examples.

{\tiny
\begin{table}
\centering
\begin{tabular}{|c|cccccccccc|c|c|} \hline
instance&&&&&a&&&& &&Frobenius \# \\ \hline
cuww4&13211 &13212 &39638 &52844 &66060 &79268 &92482 & & & &104723595 \\
cuww5&13429 &26850 &26855 &40280 &40281 &53711 &53714 &67141 & & & 45094583 \\
pro1& 25067 &49300 &49717 &62124 &87608 &88025 &113673 &119169 & & & 33367335   \\
prob2& 11948 &23330 &30635 &44197 &92754 &123389 &136951 &140745 & & &14215206  \\
prob3  &39559 &61679 &79625 &99658 &133404 &137071 &159757 &173977 & & &58424799   \\
prob4& 48709 &55893 &62177 &65919 &86271 &87692 &102881 &109765 & & &60575665   \\
prob5& 28637 &48198 &80330 &91980 &102221 &135518 &165564 &176049 & & & 62442884  \\
prob6 &20601 &40429 &40429 &45415 &53725 &61919 &64470 &69340 &78539 &95043 &22382774   \\
prob7 &18902 &26720 &34538 &34868 &49201 &49531 &65167 &66800 &84069 &137179 &27267751   \\
prob8& 17035 &45529 &48317 &48506 &86120 &100178 &112464 &115819 &125128 &129688& 21733990  \\
prob10 & 45276 &70778 &86911 &92634 &97839 &125941 &134269 &141033 &147279 &153525 &106925261   \\ \hline
prob11 &11615 &27638 &32124 &48384 &53542& 56230& 73104 &73884  &112951 &130204 &577134\\
prob12 &14770 &32480 &75923 &86053 &85747& 91772& 101240& 115403& 137390& 147371& 944183\\
prob13& 15167 &28569 &36170 &55419 &70945& 74926& 95821 &109046 &121581 &137695 &765260\\
prob14 &11828 &14253 &46209 &52042 &55987& 72649& 119704& 129334& 135589& 138360& 680230\\
prob15 &13128 &37469 &39391 &41928 &53433& 59283& 81669 &95339  &110593 &131989 &663281\\
prob16 &35113 &36869 &46647 &53560 &81518& 85287& 102780& 115459& 146791& 147097& 1109710\\
prob17 &14054 &22184 &29952 &64696 &92752& 97364& 118723& 119355& 122370& 140050& 752109\\
prob18 &20303 &26239 &33733 &47223 &55486& 93776& 119372& 136158& 136989& 148851& 783879\\
prob19 &20212 &30662 &31420 &49259 &49701& 62688& 74254 &77244  &139477 &142101 &677347\\
prob20 &32663 &41286 &44549 &45674 &95772& 111887&117611& 117763& 141840& 149740& 1037608\\\hline
\end{tabular}
\centering
\caption{Hard knapsack problems from \cite{aardal2002hard}.\label{table_hardknapsack}}
\end{table}
}
By employing \texttt{DecDenu} and \texttt{LattE}, we have re-evaluated the values of $d(F+i;a_1,\dots, a_n)$ for $i$ ranging from $0$ to $100$ for the instances listed in Table \ref{table_hardknapsack}. The reported times correspond solely to the computation of the unimodular cones. It is worth noting that the transformation from unimodular cones to $d(F+i;a_1,\dots, a_n)$ is a critical step, but the two software packages utilize distinct algorithms, as detailed in \cite{XinTodd2023} and \cite{de2009ehrhartTodd}, respectively. For example, the prob8 instance is processed by \texttt{LattE} to transform $62044$ unimodular cones
in $6.0$ seconds, whereas \texttt{DecDenu} transforms $139188$ unimodular cones in $1.2$ seconds. We observe that the computation time for both packages is nearly insensitive to the value of $i$. It is also important to mention that the method outlined in \cite{aardal2002hard} is solely capable of ascertaining whether $d(F+i;a_1,\dots, a_n) > 0$.

In the initial 12 hard instances, \texttt{DecDenu} generates a larger number of cones than \texttt{LattE}, yet its computation time is notably less. For the subsequent 10 random instances, \texttt{DecDenu} yields fewer cones than \texttt{LattE}, but its computation time is approximately one-twentieth that of \texttt{LattE}.

{\tiny
\begin{table}[htbp]
  \centering
    \begin{tabular}{c|c c| c c |c c}
    & \multicolumn{2}{c}{LattE} & \multicolumn{2}{c}{DecDenu} & \multicolumn{2}{c}{LattE/DecDenu} \\ \hline
        & \# of unimodular cone & time(s) & \# of unimodular cone & time(s) & Number ratio & Time ratio \\ \hline
    cuww4 & 364 & 0.01 & 1036 & 0.012 & 0.351 & 0.8333333 \\
    cuww5 & 2514 & 0.16 & 5548 & 0.041 & 0.453 & 3.902 \\
    prob1 & 10618 & 0.85 & 24786 & 0.178 & 0.428 & 4.775 \\
    prob2 & 6244 & 0.51 & 11072 & 0.077 & 0.564 & 6.623 \\
    prob3 & 12972 & 1.01 & 11490 & 0.08 & 1.129 & 12.625 \\
    prob4 & 9732 & 0.72 & 15438 & 0.112 & 0.630 & 6.429 \\
    prob5 & 8414 & 0.73 & 29595 & 0.221 & 0.284 & 3.303 \\
    prob6 & 26448 & 2.82 & 52916 & 0.409 & 0.500 & 6.895 \\
    prob7 & 20192 & 2.54 & 43552 & 0.35 & 0.464 & 7.257 \\
    prob8 & 62044 & 7.56 & 139188 & 1.09 & 0.446 & 6.936 \\
    prob9 & 36354 & 4.07 & 69808 & 0.547 & 0.521 & 7.441 \\
    prob10 & 38638 & 3.88 & 53766 & 0.42 & 0.719 & 9.238 \\ \hline
    prob11 & 5600284 & 701 & 4455683 & 36.7 & 1.257 & 19.101 \\
    prob12 & 10778801 & 1417 & 6961202 & 57.9 & 1.548 & 24.473 \\
    prob13 & 8400814 & 1076 & 6085420 & 52.4 & 1.380 & 20.534 \\
    prob14 & 9068188 & 1221 & 7026995 & 59.3 & 1.290 & 20.590 \\
    prob15 & 7647072 & 944 & 5183979 & 43.7 & 1.475 & 21.602 \\
    prob16 & 6242020 & 818 & 4921562 & 40.9 & 1.268 & 20.000 \\
    prob17 & 9000574 & 1237 & 6519150 & 54.2 & 1.381 & 22.823 \\
    prob18 & 10097931 & 1309 & 6450759 & 53.4 & 1.565 & 24.513 \\
    prob19 & 8506304 & 1074 & 6041508 & 49.8 & 1.408 & 21.566 \\
    prob20 & 9568498 & 1248 & 6527133 & 54.6 & 1.466 & 22.857 \\\hline
    \end{tabular}%
     \caption{Data for \texttt{LattE} and \texttt{DecDenu}. \label{table_result_cuww}}
\end{table}%
}

We conclude this section with the following remarks.

The multiplier chosen by \texttt{DecDenu} may not always be optimal. For instance, let $\alpha=(13429,26850, 26855 ,40280 ,40281 ,53711, 53714, 67141)$ be
the vector in problem cuww5. In the computation of $f^m(\ci 2;\alpha)$, the LLL algorithm suggests the multiplier $2447$, resulting in
$$  f^m(\ci 2;\alpha) \overset{\m=2447}{=} f^m(\ci 1; 26850,3637, 12235, 1190, 1257, 67, 7408, 1123)$$
and then \texttt{DecDenu} produces $2226$ leaves.
In contrast, using the multiplier $1$ yields
\begin{align*}
   f^m(\ci 2;\alpha) \overset{\m=1}{=} &f^m(\ci 1; 26850, 13421, 5, 13420, 13419, 11, 14, 13409) \\
   =&\sum_{i=2}^{8} f^m(\ci i; 26850, 13421, 5, 13420, 13419, 11, 14, 13409)
\end{align*}
and then \texttt{DecDenu} gives $910$ leaves.

We can seamlessly transition to \texttt{LattE} at any node of $\TD$, but we are unable to switch to \texttt{DecDenu} at nodes in $\TB$.
\begin{enumerate}
  \item When encountering a bad vertex in $\TD$, we have the option to switch to \texttt{LattE}.

  \item In all of the finite cases where $a_1 \leq 13$ and $d<10$, the proportion of cases where $nl(\TD)>nl(\TB)$ is less than $1/10$. It is feasible to log these instances and employ \texttt{LattE} when such cases arise.
\end{enumerate}

\section{LLLCTEuclid: A Novel Algorithm for Constant Term Extraction}
We commence by elucidating the domain of the iterated Laurent series and the foundational elements essential for this section. Subsequently, we delineate the \texttt{LLLCTEuclid} algorithm, an innovative application of our \texttt{DecDenu} algorithm for the extraction of constant terms from Elliott rational functions. The \texttt{LLLCTEuclid} algorithm is grounded in the principles of \texttt{CTEuclid} but leverages the LLL's algorithm extensively. This integration positions it as a formidable tool for constant term extraction, making it a compelling choice among elimination-based algorithms.

\subsection{The field of iterated Laurent series}
Iterated Laurent series provides a powerful framework for handling multivariate rational functions, particularly in the context of constant term extraction. For any field \( K \), the field \( K((x)) \) of Laurent series is constructed as \(\{\sum_{k=N}^{\infty} a_k x^k: N\in \mathbb{Z}, a_k\in K\}\). The field of iterated Laurent series can be recursively defined, and it was introduced by the first author in \cite{xin2004fast} as a foundational tool for constant term extraction.

In this sub-section, we provide a succinct overview of the iterated Laurent series. Our working field is \( K = \mathbb{Q}((x_m))((x_{m-1}))\cdots((x_1)) \), which encompasses the field of rational functions as a subfield. This field is specified by the sequence of variables \( (x_1,\dots, x_m) \), and the order of the variables is crucial in the series expansion of a rational function.

\emph{Unique Series Expansion in \( K \).} Working within \( K \), every element possesses a unique series expansion. Consequently, the constant term operators \( \CT_{x_i} \) are well-defined and commute with each other, as evidenced by the relations \( \CT_{x_i}\CT_{x_j}=\CT_{x_j}\CT_{x_i} \). These commute relations are indispensable. Importantly, there is no convergence issue; the constant term of an iterated Laurent series remains an iterated Laurent series.

We are particularly interested in Elliott-rational functions, which are rational functions with binomial factors only in their denominator. Elliott-rational functions are named after Elliott's reduction identity, which demonstrates that the constant term of an Elliott rational function is still an Elliott rational function.
The expansion of an Elliott rational function in \( K \) is straightforward yet pivotal. Every monomial \( M \neq 1 \) can be compared with \( 1 \) in \( K \) by identifying the ``smallest" variable \( x_j \) appearing in \( M \), i.e., \( \deg_{x_i} M = 0 \) for all \( i < j \). If \( \deg_{x_j} M > 0 \), then \( M \) is considered ``small", denoted \( M < 1 \); otherwise, \( M \) is ``large", denoted \( M > 1 \). For the unique series expansion of Elliott-rational functions in \( K \), it suffices to consider the following two types of series expansions:
\[
\frac 1 {1-M} = \left\{
  \begin{array}{ll}
    \sum_{k\geq0}M^k, & \text{ if } M < 1 ;\\
    \frac 1 {-M(1-1/M)} = \sum_{k\geq 0} -\frac 1 {M^{k+1}}, & \text{ if } M > 1.
  \end{array}
\right.
\]

When expanded as a series in \( K \), an Elliott-rational function \( E \) is typically expressed in its proper form:
\begin{equation}\label{f-Elliott}
E = \frac{\text{ a Laurent polynomial}}{
\prod_{j=1}^n (1-\text{Monomial}_j)}
=\frac{L}{
\prod_{j=1}^n (1-M_j)}=L\prod_{j=1}^n \Big(\sum_{k\ge 0} (M_j) ^k\Big),  \  (\textrm{proper form})
\end{equation}
where \( L \) is a Laurent polynomial and \( M_j < 1 \) for all \( j \). It is important to note that the proper form of \( E \) is not unique; for instance, \( 1/(1-x) = (1+x)/(1-x^2) \) are both proper forms.

\medskip
Now, we elucidate the \emph{core problem} within MacMahon's partition analysis. Let \( \Lambda = \{\lambda_1, \dots, \lambda_r\} = \{x_{i_1}, \dots, x_{i_r}\} \) be a subset of variables, and we employ the concise notation \( \Lambda^{\mathbf{b}} = \lambda_1^{b_1} \cdots \lambda_r^{b_r} \). We can then express
\begin{equation}\label{e-Lambda-Elliott}
E = \frac{L(\Lambda)}{\prod_{j=1}^n (1-\Lambda^{\mathbf{c}_j} M_j)},  \quad (\textrm{proper form in }K)
\end{equation}
where \( M_j \) are independent of \( \Lambda \) for all \( j \). The \emph{type} of \( E \) (with respect to \( \Lambda \)) is defined to be the matrix
\( A = (c_1, \dots, c_n) \). It is typically assumed that \( A \) is of rank \( r \), and \( d = n - r \) is referred to as the dimension of
\( \CT_{\Lambda} E \). If \( L(\Lambda) = \Lambda^{-\mathbf{b}} \), then
$$ \CT_{\Lambda} E = \sum_{\alpha \in P \cap \mathbb{N}^n} M_1^{\alpha_1} \cdots M_n^{\alpha_n},$$
where \( P \) is the polyhedron defined by \( P = \{\alpha \in \mathbb{R}^n : A\alpha = \mathbf{b}\}.\)

\noindent
{\bf Core Problem.}
Given an Elliott rational function \( E \) as described, the objective is to represent the constant term of \( E \) in \( K \) as follows:
\begin{align*}
\CT_{\lambda_1, \dots, \lambda_r} E & = \text{ a short sum of simple rational functions free of the } \lambda \text{-variables}.
\end{align*}

\subsection{The \texttt{LLLCTEuclid} Algorithm}
\begin{algor}[LLLCTEuclid]
\label{alg:LLLCTEuclid}
\mbox{ } \\
\textbf{Input:} An Elliott rational function \( E \) as in \eqref{e-Lambda-Elliott} and a working field \( K \).
\\
\textbf{Output:} \( \CT_{\Lambda} E \) as a short sum of simple rational functions.
\begin{enumerate}
  \item[1.1] Initialize
  \begin{equation}\label{eq_O0}
O_0' = \frac{L(\Lambda)} {(1-z_1M_1(x)\Lambda^{c_1})(1-z_2M_2(x)\Lambda^{c_2})\cdots (1-z_nM_n(x)\Lambda^{c_n})},
  \end{equation}
and work in \( K' = K((z_n))\cdots((z_1)) \), the field specified by \( (z_1, \dots, z_n, x_1, \dots, x_m) \).
  \item[1.2] (Optimization) By optimizing the matrix \( A \), find \( (A^L, U) \) such that i) \( U \) is a nonsingular \( r \times r \) matrix; ii) \( A^L = UA = (c_1', \dots, c_n') \) is an integral matrix; iii) the row vectors of \( A^L \) form a reduced basis. Let \( O_0 \) be obtained from \( O_0' \) by replacing \( \lambda_i \) with \( \Lambda^{Ue_i} \) for all \( i \), yielding
  $$O_0 = \frac {L(\Lambda^{Ue_1}, \dots, \Lambda^{Ue_r})} {(1-z_1M_1(x)\Lambda^{c_1'})(1-z_2M_2(x)\Lambda^{c_2'})\cdots (1-z_nM_n(x)\Lambda^{c_n'})},$$
  where the type of \( O_0 \) is now \( A^L \).
  \item[2.1] Compute \( O_{i+1} \) from \( O_i \) for \( i = 0, 1, \dots, r-1 \) as follows:
  \item[2.2] For each summand \( O_{ij} \) in \( O_i \), select a variable \( \lambda \) and apply \cite[Lemma 9]{xin2015euclid} to \( F(\lambda) = O_{ij} \) to express \( \CT_{\lambda}O_{ij} \) as a linear combination of \( \A_{1-u_i\lambda^{a_i}} F^{\pm}(\lambda) \) for some \( F^{\pm}(\lambda) \) of a similar form as \( F(\lambda) \).
  \item[2.3] (Optimization) Compute each \( \A_{1-u_i\lambda^{a_i}} F^{\pm}(\lambda) \) using Algorithm \ref{alg_DecDenu} (\texttt{DecDeun}) in Section \ref{sec_DecDenu}.
  \item[3] After eliminating all the variables \( \lambda_i \), we obtain a big sum
  $$O_r = \CT_{\Lambda}O_0 = \sum_i \frac{L_i(x;z)}{(1-M_{i1}z^{B_{i1}})(1-M_{i2}z^{B_{i2}})\cdots (1-M_{id}z^{B_{id}})}.$$
  Eliminate the slack variables \( z_j = 1 \) for all \( j \) using generalized Todd polynomials as in \cite{XinTodd2023}.
\end{enumerate}

\end{algor}

The novel algorithm, \texttt{LLLCTEuclid}, is constructed within the framework of \texttt{CTEuclid}, with modifications to Steps 1.2, 2.3, and 3. The computation of \( \A_{1-u_i\lambda^{a_i}} F^{\pm}(\lambda) \) in Step 2.3 is the focal point of this study, which we have addressed in Section \ref{sec_DecDenu}. Step 3 employs the log-exponential trick for generalized Todd polynomials, as detailed in our recent work \cite{XinTodd2023}.

We now elaborate on Step 1, which introduces a new simplification facilitated by the LLL algorithm. For simplicity, we consider the case where \( L(\Lambda) = \Lambda^{-\mathbf{b}} \) to illustrate the concept. The approach extends naturally for Laurent polynomial $L$. The inclusion of slack variables \( z_j \) in Step 1 ensures that the denominator factors of \( O_{ij} \) remain coprime throughout our elimination process. Consequently, Proposition \ref{prop-PFD-fund} can be applied consistently when computing \( \CT_{\lambda} O_{ij} \) in Step 2.

Using direct series expansion, it is evident that
$$\CT_{\Lambda} E = \Big(\CT_{\Lambda} O_0'\Big)\Big|_{z_i=1, i=1,2,\dots,n}.$$
The type of \( O_0' \) is \( A = (c_1, \dots, c_n) \), and the constant term corresponds to a weighted count of lattice points within the polytope \( P = \{\alpha \in \mathbb{R}^n : A\alpha = \mathbf{b} \}. \) Elementary row operations do not alter the solution set, thus \( P = \{\alpha \in \mathbb{R}^n : WA\alpha = W\mathbf{b} \} \) for any nonsingular matrix \( W \). We aim for \( WA \) to have small integral entries. Additionally, we seek to simplify equations such as \( k\alpha_1 - 2k\alpha_n = b \) to \( \alpha_1 - 2\alpha_n = \frac{b}{k} \), ensuring that \( P \) contains no lattice points when \( \frac{b}{k} \) is not an integer. This corresponds to dividing a row of \( WA \) by their greatest common divisor.

To utilize the aforementioned idea, we define the action of a nonsingular matrix \( W = (w_{ij})_{r \times r} \) by
$$WF(\Lambda) := W \circ F(\Lambda) = F(\gamma_1, \gamma_2, \dots, \gamma_n), \quad \text{where} \quad \gamma_i = \Lambda^{W e_i} = \lambda_1^{w_{1i}} \cdots \lambda_n^{w_{ni}}. $$
We allow \( W \) to possess rational entries, hence the right-hand side may include fractional powers. Notably, \( W\Lambda^{\alpha} = \Lambda^{W\alpha} \). This leads to the following variant of Jacobi's change of variable formula.
\begin{lem} \label{lem-tr}
Suppose \( W \) is a nonsingular matrix and \( F(\Lambda) \) is a Laurent polynomial in \( \Lambda \). Then we have
$$\CT_{\Lambda} F(\Lambda) = \CT_{\Lambda} (W \circ F(\Lambda)).
$$
\end{lem}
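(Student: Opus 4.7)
The plan is to prove Lemma \ref{lem-tr} by reducing to the case of a single monomial and invoking the nonsingularity of $W$. Since $F(\Lambda)$ is a Laurent polynomial, I can write it as a finite sum $F(\Lambda) = \sum_{\alpha \in \mathbb{Z}^r} c_\alpha \Lambda^\alpha$. By the definition of the action, $W \circ F(\Lambda) = F(\gamma_1,\dots,\gamma_r)$ with $\gamma_i = \Lambda^{W e_i}$, and since $\Lambda^\alpha$ evaluates under this substitution to $\prod_i \gamma_i^{\alpha_i} = \Lambda^{W\alpha}$, I obtain $W\circ F(\Lambda) = \sum_{\alpha} c_\alpha \Lambda^{W\alpha}$. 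Both $\CT_\Lambda F(\Lambda)$ and $\CT_\Lambda (W \circ F(\Lambda))$ are linear in the coefficients $c_\alpha$, so it is enough to verify the identity one monomial at a time.

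For a single monomial $\Lambda^\alpha$, the constant term $\CT_\Lambda \Lambda^\alpha$ is $1$ when $\alpha$ is the zero vector and $0$ otherwise; likewise $\CT_\Lambda \Lambda^{W\alpha}$ equals $1$ precisely when $W\alpha = 0$ and is $0$ otherwise. The nonsingularity of $W$ gives the equivalence $W\alpha = 0 \iff \alpha = 0$, so the two constant terms agree for every exponent vector $\alpha$. Multiplying by $c_\alpha$ and summing over the finite support of $F$ yields $\CT_\Lambda F(\Lambda) = c_0 = \CT_\Lambda (W \circ F(\Lambda))$, which is exactly the claim.

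The one pedantic point worth flagging is that $W$ is permitted to have rational (non-integer) entries, so $W\alpha$ may have fractional coordinates and $W \circ F(\Lambda)$ formally lies in a Laurent polynomial ring in fractional powers of the $\lambda_i$ rather than in $K$ itself. This causes no trouble, because the constant term operator is defined by selecting the coefficient of the exponent vector $0$, and a nonzero fractional exponent is still nonzero. I do not anticipate a genuine obstacle here: the lemma is essentially the remark that a linear automorphism of the exponent space fixes, and only fixes, the origin, so the coefficient of $\Lambda^0$ is preserved under the induced substitution.
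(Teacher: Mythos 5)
Your argument is correct and is essentially identical to the paper's proof: reduce by linearity to a single monomial $\Lambda^\alpha$, observe that $W\circ\Lambda^\alpha=\Lambda^{W\alpha}$, and use nonsingularity of $W$ to get $W\alpha=0\Leftrightarrow\alpha=0$. Your extra remark about rational entries of $W$ producing fractional exponents is a reasonable clarification but not a departure from the paper's route.
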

\begin{proof}
  By linearity, we can assume \( F(\Lambda) = \Lambda^\alpha \). Then \( W \circ F(\Lambda) = \Lambda^{W\alpha} \). Since \( \alpha = 0 \Leftrightarrow W\alpha = 0 \), the lemma follows.
\end{proof}
By linearity, this lemma extends to encompass all \( F \) that are series in other variables with coefficients that are Laurent polynomials in \( \Lambda \). For instance, $F$ is in the ring
\(\mathbb{Q}[x_1, \dots, x_m, x_1^{-1}, \dots, x_m^{-1}][[z_1, z_2, \dots, z_n]] \). In particular, the lemma applies to \( O_0' \). It is worth noting that the general Jacobi's change of variable formula involves constant terms in two fields. For instance,
\(\CT_{\lambda} \sum_{k=0}^{\infty} \lambda^k = \CT_{\lambda} \sum_{k=0}^{\infty} \lambda^{-k}\), but the right-hand side is not a Laurent series in \( \lambda \). For more details, see \cite{xin2005residue}.
\begin{prop}
  Let \( O_0' \) be as in \eqref{eq_O0} of type \( A_{r \times n} \) with rank \( s \). Then one can find an equivalent constant term \( \CT_\Lambda O_0' = \CT_\Lambda O_0 \) such that: i) \( O_0 \) is free of \( \lambda_{s+1}, \dots, \lambda_r \); ii) the type \( A^L \) of \( O_0 \) is of full rank \( s \); iii) \( A^L \) only has 1 as its \( s \)-th determinant factor; iv) the rows of \( A^L \) form a reduced basis.
\end{prop}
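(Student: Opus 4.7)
The plan is to build the matrix $U$ (equivalently, its left-action $W$) in three stages, each justified by Lemma \ref{lem-tr} so the constant term is preserved, after which properties (i)--(iv) can be read off from the construction.

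Stage 1 (Hermite-type reduction). Since $\mathrm{rank}(A)=s$, a unimodular integer matrix $P_1\in GL_r(\mathbb{Z})$, obtainable from a Hermite normal form computation, places $P_1 A$ in block form $\binom{B}{\mathbf{0}}$ with $B$ an $s\times n$ integer matrix of rank $s$. Applying $P_1$ via Lemma \ref{lem-tr} preserves the constant term, and the denominator of $P_1\circ O_0'$ loses all dependence on $\lambda_{s+1},\dots,\lambda_r$.

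Stage 2 (Saturation for property (iii)). By Smith normal form applied to $B$, there are unimodular $P_2\in GL_s(\mathbb{Z})$ and $Q_2\in GL_n(\mathbb{Z})$ with $P_2 B Q_2 = (D\ |\ \mathbf{0})$, $D=\mathrm{diag}(d_1,\dots,d_s)$. Writing $C$ for the first $s$ rows of $Q_2^{-1}$, one checks $B = (P_2^{-1}D)\cdot C$, so the row lattice of $C$ is the saturation of that of $B$ and hence has $s$-th determinantal divisor $1$. Setting $W_1 := \mathrm{diag}\bigl(D^{-1}P_2,\ I_{r-s}\bigr)\cdot P_1$ (a nonsingular rational matrix) gives $W_1 A = \binom{C}{\mathbf{0}}$, and Lemma \ref{lem-tr} still applies.

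Stage 3 (LLL for property (iv)). Run LLL on the rows of $C$ to obtain a unimodular $V\in GL_s(\mathbb{Z})$ such that $A^L := VC$ is LLL-reduced. Unimodularity preserves rank and the $s$-th determinantal factor, so $A^L$ satisfies (ii) and (iii), and it satisfies (iv) by construction. Putting $U := \mathrm{diag}(V,\ I_{r-s})\cdot W_1$, we get $UA = \binom{A^L}{\mathbf{0}}$, and by Lemma \ref{lem-tr}, $\CT_{\Lambda}O_0' = \CT_{\Lambda}(U\circ O_0')$. Because the denominator of $U\circ O_0'$ is independent of $\lambda_{s+1},\dots,\lambda_r$, the commuting operators $\CT_{\lambda_{s+1}},\dots,\CT_{\lambda_r}$ act only on the numerator, selecting those monomials whose exponents in these variables vanish; the outcome is the desired $O_0$, free of $\lambda_{s+1},\dots,\lambda_r$ and of type $A^L$, establishing (i).

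The main obstacle is Stage 2: enforcing the $s$-th determinantal factor $1$ requires the nonintegral factor $D^{-1}$, so $U$ is only rational in general and the intermediate monomials $\Lambda^{Ue_i}$ may carry fractional exponents. Lemma \ref{lem-tr} is stated for arbitrary nonsingular $W$ precisely to accommodate this (its proof uses only $W\alpha=0 \Leftrightarrow \alpha=0$), so the passage is legitimate. One merely needs to verify that the numerator, a finite $\mathbb{Q}$-linear combination of monomials in $\Lambda$ with possibly rational exponents, admits a well-defined preliminary $\CT_{\lambda_{s+1},\dots,\lambda_r}$; the denominator exponents stay integer because the columns of $UA$ equal those of $\binom{A^L}{\mathbf{0}}$, an integer matrix by construction.
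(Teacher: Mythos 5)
Your proof is correct and follows essentially the same route as the paper: Smith normal form to saturate the row lattice (forcing $s$-th determinantal divisor $1$ via the rational scaling by $D^{-1}$), Lemma \ref{lem-tr} to justify the change of variables, and LLL for the final reduction. The only cosmetic difference is your preliminary Hermite step isolating the zero rows, which the paper absorbs into a single Smith normal form computation on the full $r\times n$ matrix.
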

 \begin{proof}
We commence by employing the Smith normal form to identify unimodular matrices \( U \) and \( V \) such that \( U A V = S = \begin{pmatrix}
      \text{diag}(d_1, \dots, d_s) & 0 \\
      0 & 0
    \end{pmatrix} \). This yields \( A\alpha = 0 \Leftrightarrow S V^{-1} \alpha = 0 \Leftrightarrow k_i = 0 \) for \( i = 1, 2, \dots, s \), where \( V^{-1}\alpha = (k_1, \dots, k_n)^T \) is integral.
Subsequently, we define \( W_1 = \begin{pmatrix}
      \text{diag}(d_1^{-1}, \dots, d_s^{-1}) & 0 \\
      0 & E_{n-s}
    \end{pmatrix} U \) and \( A' = W_1 A = \begin{pmatrix}
      E_s & 0 \\
      0 & 0
    \end{pmatrix}
 V^{-1}. \)
Applying the lemma \ref{lem-tr}, we obtain
\begin{align*}
 \CT_{\Lambda} O_0' &=\CT_\Lambda \frac{W_1 L(\Lambda)} {W_1\circ (1-z_1M_1(x)\Lambda^{c_1})(1-z_2M_2(x)\Lambda^{c_2})\cdots (1-z_nM_n(x)\Lambda^{c_n})}
 \\
 &=\CT_{\lambda_1,\dots, \lambda_s} \frac{L'(\lambda_1,\dots, \lambda_s)}{W_1\circ (1-z_1M_1(x)\Lambda^{c_1})(1-z_2M_2(x)\Lambda^{c_2})\cdots (1-z_nM_n(x)\Lambda^{c_n})},
\end{align*}
where the denominator is devoid of \( \lambda_{s+1}, \dots, \lambda_r \), allowing us to compute \( L'(\lambda_1,\dots, \lambda_s) = \CT_{\lambda_{s+1},\dots, \lambda_r} W_1 L(\Lambda) \) straightforwardly.
We then define \( A'' \) as the first \( s \) rows of \( A' \), and its Smith normal form is \( UA''V =(E_s, 0) \), implying that the \( s \)-th determinant factor of \( A'' \) is \( 1 \). Consequently, for any unimodular \( W \), any row of \( WA'' \) has the greatest common divisor $1$.

Finally, we apply LLL's algorithm to \( A'' \) to identify a unimodular matrix \( W_2 \) such that \( A^L = W_2 A'' \) with row vectors forming a reduced basis. We then establish
$$ O_0 = W_2\circ \frac{L'(\lambda_1,\dots, \lambda_s)}{ W_1\circ (1-z_1M_1(x)\Lambda^{c_1})(1-z_2M_2(x)\Lambda^{c_2})\cdots (1-z_nM_n(x)\Lambda^{c_n})}, $$
which satisfies the desired conditions outlined in the proposition.
\end{proof}

\begin{exam}
Compute $\CT_\Lambda F$, where $F=F(\Lambda)$ has the following proper form:
$$F=\frac{
\frac{ \lambda_{1}^{11} \lambda_{4}^{33} }{\lambda_{2}^{33} \lambda_{3}^{39}}
 +\frac{\lambda_3^{32}}{\lambda_1^{4} \lambda_2^{112} \lambda_4^{7}}
+ \frac{\lambda_3^{111}}{\lambda_1^{15} \lambda_2^{358} \lambda_4^{45}}
}
{
\left(1-\frac{\lambda_{1}^{19} \lambda_{2}^{165} \lambda_{4}^{57} z_{1}}{\lambda_{3}^{99}}\right) \left(1-\frac{\lambda_{3}^{36} z_{2}}{\lambda_{1}^{8} \lambda_{2}^{30} \lambda_{4}^{24}}\right) \left(1-\frac{\lambda_{3}^{5} z_{3}}{\lambda_{1} \lambda_{2}^{7} \lambda_{4}^{3}}\right) \left(1-\frac{\lambda_{3}^{50} z_{4}}{\lambda_{1}^{10} \lambda_{2}^{72} \lambda_{4}^{30}}\right) \left(1-\frac{\lambda_{2}^{28} z_{5}}{\lambda_{3}^{4}}\right)
}.$$
\end{exam}
\begin{proof}[Solution]
Using \texttt{CTEuclid} produces 23 terms, while using \texttt{LLLCTEuclid} only produces 9 terms.
We only illustrate the first step of the \texttt{LLLCTEuclid}.

The type of $F$ is represented by the matrix
$$A=\left(\begin{array}{ccccc}
19 & -8 & -1 & -10 & 0
\\
 165 & -30 & -7 & -72 & 28
\\
 -99 & 36 & 5 & 50 & -4
\\
 57 & -24 & -3 & -30 & 0
\end{array}\right).$$
The Smith Normal Form of matrix $A$ is given by
$$ S=\left(\begin{array}{ccccc}
1 & 0 & 0 & 0 & 0
\\
 0 & 2 & 0 & 0 & 0
\\
 0 & 0 & 4 & 0 & 0
\\
 0 & 0 & 0 & 0 & 0
\end{array}\right)
 =UAV= \left(\begin{array}{cccc}
1 & 0 & 0 & 0
\\
 -4 & 1 & 1 & 0
\\
 -3 & 0 & 1 & 0
\\
 -3 & 0 & 0 & 1
\end{array}\right)
A\left(\begin{array}{ccccc}
-2 & 0 & -1 & -1 & -1
\\
 0 & 0 & 0 & 1 & 0
\\
 251 & 10 & 121 & 3 & 1
\\
 -29 & -1 & -14 & -3 & -2
\\
 0 & 0 & 0 & 0 & 1
\end{array}\right)
.$$
Let $W_1$ be defined as
$$W_1 = \textrm{diag}\left(1,\frac1 2,\frac 1 4,1\right) U =\left(\begin{array}{cccc}
1 & 0 & 0 & 0
\\
 -2 & \frac{1}{2} & \frac{1}{2} & 0
\\
 -\frac{3}{4} & 0 & \frac{1}{4} & 0
\\
 -3 & 0 & 0 & 1
\end{array}\right)
$$
and then the transformed matrix $A'$ is given by
$$A'= W_1 A =\left(\begin{array}{ccccc}
19 & -8 & -1 & -10 & 0
\\
 -5 & 19 & 1 & 9 & 12
\\
 -39 & 15 & 2 & 20 & -1
\\
 0 & 0 & 0 & 0 & 0
\end{array}\right).$$
Set $A''$  as the first \( 3 \) rows of \( A' \) and apply the LLL algorithm  on the rows of $A''$, we obtain
$$A^L =W_2 A'' =\left(\begin{array}{ccccc}
-1 & -1 & 0 & 0 & -1
\\
 0 & 0 & -1 & -1 & -1
\\
 2 & -1 & 0 & -1 & 0
\end{array}\right),$$
where $W_2=\left(\begin{array}{ccc}
2 & 0 & 1
\\
 -222 & -9 & -107
\\
 25 & 1 & 12
\end{array}\right).$
Hence, we derive the following transformation:
\begin{align*}
  \CT_\Lambda F = \CT_\Lambda W_2\circ (W_1\circ F) &=\CT_\Lambda \frac{
\lambda_{1}^{4} \lambda_{2}^{6} \lambda_{3}
+ \frac{\lambda_1^3 \lambda_4^5}{ \lambda_2}
+\frac{\lambda_1^9  }{\lambda_2^{3/2} \lambda_3^{1/2}}
}
{
 \left(1-\frac{\lambda_{3}^{2} z_{1}}{\lambda_{1}}\right) \left(1-\frac{z_{2}}{\lambda_{1} \lambda_{3}}\right) \left(1-\frac{z_{3}}{\lambda_{2}}\right) \left(1-\frac{z_{4}}{\lambda_{2} \lambda_{3}}\right) \left(1-\frac{z_{5}}{\lambda_{1} \lambda_{2}}\right)
 } \\
   & =\CT_\Lambda \frac{
\lambda_{1}^{4} \lambda_{2}^{6} \lambda_{3}
}
{
 \left(1-\frac{\lambda_{3}^{2} z_{1}}{\lambda_{1}}\right) \left(1-\frac{z_{2}}{\lambda_{1} \lambda_{3}}\right) \left(1-\frac{z_{3}}{\lambda_{2}}\right) \left(1-\frac{z_{4}}{\lambda_{2} \lambda_{3}}\right) \left(1-\frac{z_{5}}{\lambda_{1} \lambda_{2}}\right)
 }.
\end{align*}

%This transformation indeed represents the first step of the \texttt{LLLCTEuclid} algorithm. In this example, it reduces the number of terms in the output from $20$ to $9$.
\end{proof}

\noindent
{\small \textbf{Acknowledgements:}
%The authors would like to express their sincere appreciation for all suggestions for improving the presentation of this paper.
%We are grateful to Zihao Zhang for many useful suggestions.
This work is partially supported by the National Natural Science Foundation of China [12071311].}


\begin{thebibliography}{10}

\bibitem{aardal2002hard}
{\sc K.~Aardal and A.~K. Lenstra}, {\em Hard equality constrained integer
  knapsacks}, in Integer Programming and Combinatorial Optimization: 9th
  International IPCO Conference Cambridge, MA, USA, May 27--29, 2002
  Proceedings 9, Springer, 2002, pp.~350--366.

\bibitem{alfonsin2005diophantine}
{\sc J.~L.~R. Alfons{\'\i}n}, {\em The diophantine Frobenius problem}, OUP
  Oxford, 2005.

\bibitem{andrews2001macmahonOmegaalgorithm2}
{\sc G.~E. Andrews, P.~Paule, and A.~Riese}, {\em {MacMahon's partition
  analysis VI: a new reduction algorithm}}, Annals of Combinatorics, 5 (2001),
  pp.~251--270.

\bibitem{bajo2024weighted}
{\sc E.~Bajo, R.~Davis, J.~A. De~Loera, A.~Garber, S.~G. Mora, K.~Jochemko, and
  J.~Yu}, {\em Weighted ehrhart theory: Extending stanley's nonnegativity
  theorem}, Advances in Mathematics, 444 (2024), p.~109627.

\bibitem{baldoni2013coefficients}
{\sc V.~Baldoni, N.~Berline, J.~De~Loera, B.~Dutra, M.~Koeppe, and M.~Vergne},
  {\em {Coefficients of Sylvester's Denumerant}}, arXiv preprint
  arXiv:1312.7147,  (2013).

\bibitem{Baldoni2012}
{\sc V.~Baldoni, N.~Berline, J.~A. De~Loera, M.~K{\"o}ppe, and M.~Vergne}, {\em
  {Computation of the highest coefficients of weighted Ehrhart
  quasi-polynomials of rational polyhedra}}, Foundations of Computational
  Mathematics, 12 (2012), pp.~435--469.

\bibitem{barvinokwodd2003short}
{\sc A.~Barvinok and K.~Woods}, {\em Short rational generating functions for
  lattice point problems}, Journal of the American Mathematical Society, 16
  (2003), pp.~957--979.

\bibitem{barvinok1994polynomialalgorithm}
{\sc A.~I. Barvinok}, {\em A polynomial time algorithm for counting integral
  points in polyhedra when the dimension is fixed}, Mathematics of Operations
  Research, 19 (1994), pp.~769--779.

\bibitem{breuer2017polyhedral}
{\sc F.~Breuer and Z.~Zafeirakopoulos}, {\em {Polyhedral Omega: a new algorithm
  for solving linear diophantine systems}}, Annals of Combinatorics, 21 (2017),
  pp.~211--280.

\bibitem{de2005computational}
{\sc J.~A. De~Loera, D.~Haws, R.~Hemmecke, P.~Huggins, and R.~Yoshida}, {\em {A
  computational study of integer programming algorithms based on Barvinok's
  rational functions}}, Discrete Optimization, 2 (2005), pp.~135--144.

\bibitem{de2009ehrhartTodd}
{\sc J.~A. De~Loera, D.~C. Haws, and M.~K{\"o}ppe}, {\em Ehrhart polynomials of
  matroid polytopes and polymatroids}, Discrete \& computational geometry, 42
  (2009), pp.~670--702.

\bibitem{de2004effectiveLattE}
{\sc J.~A. De~Loera, R.~Hemmecke, J.~Tauzer, and R.~Yoshida}, {\em Effective
  lattice point counting in rational convex polytopes}, Journal of symbolic
  computation, 38 (2004), pp.~1273--1302.

\bibitem{DONGRE202339}
{\sc P.~Dongre, B.~Drabkin, J.~Lim, E.~Partida, E.~Roy, D.~Ruff, A.~Seceleanu,
  and T.~Tang}, {\em Computing rational powers of monomial ideals}, Journal of
  Symbolic Computation, 116 (2023), pp.~39--57.

\bibitem{cl2013LLL}
{\sc C.~Ling, W.~H. Mow, and N.~Howgrave-Graham}, {\em Reduced and
  fixed-complexity variants of the lll algorithm for communications}, IEEE
  Transactions on Communications, 61 (2013), pp.~1040--1050.

\bibitem{stanley2011EC1}
{\sc R.~P. Stanley}, {\em Enumerative Combinatorics Volume I second edition},
  Cambridge University Press, 2011.

\bibitem{xin2004fast}
{\sc G.~Xin}, {\em {A fast algorithm for MacMahon's partition analysis}}, The
  Electronic Journal of Combinatorics, 11 (2004), p.~1.

\bibitem{xin2005residue}
\leavevmode\vrule height 2pt depth -1.6pt width 23pt, {\em {A residue theorem
  for Malcev--Neumann series}}, Advances in Applied Mathematics, 35 (2005),
  pp.~271--293.

\bibitem{xin2015euclid}
\leavevmode\vrule height 2pt depth -1.6pt width 23pt, {\em {A Euclid style
  algorithm for MacMahon's partition analysis}}, Journal of Combinatorial
  Theory, Series A, 131 (2015), pp.~32--60.

\bibitem{xinxudedekindsums2023}
{\sc G.~Xin and X.~Xu}, {\em {A polynomial time algorithm for calculating
  Fourier-Dedekind sums}}, arXiv preprint arXiv:2303.01185,  (2023).

\bibitem{xin2023algebraic}
{\sc G.~Xin and C.~Zhang}, {\em {An algebraic combinatorial approach to
  Sylvester's denumerant}}, arXiv preprint arXiv:2312.01569,  (2023).

\bibitem{XinTodd2023}
{\sc G.~Xin, Y.~Zhang, and Z.~Zhang}, {\em Fast evaluation of generalized todd
  polynomials: Applications to macmahon's partition analysis and integer
  programming}, arXiv preprint arXiv:2304.13323,  (2023).

\end{thebibliography}
\end{document}